\newtheorem{theorem}{Theorem}[section]
\newtheorem{lemma}[theorem]{Lemma}
\newtheorem{conjecture}[theorem]{Conjecture}
\theoremstyle{definition}
\theoremstyle{remark}
\theoremstyle{proposition}
\newtheorem{proposition}[theorem]{Proposition}
\numberwithin{equation}{section}
\newcommand{\Area}{{\rm area}\,}
\newcommand{\be}{\begin{equation}}
\newcommand{\ee}{\end{equation}}
\renewcommand{\d}[1]{\,\mathrm{d}{#1}}
\title{Growth of hyperbolic cells} 
\author[P.Chakraborty]{Pritha Chakraborty}%\,${}^*$}
\address{Department of Mathematics and Statistics, Texas Tech
University, Lubbock, TX 79409} \email{pritha.chakraborty@ttu.edu}
\begin{document}%

%%%%%%%%%%%%%%%%%%%%%%%%%%%%%%%%%%%%%%%%%%%%%%%%%%%%%%%%%%%%%%%%%%%%%%%%%%%%%%%%%%%%%%%%%%%%%%%%%

\begin{abstract}%
We consider a hyperbolic polygon in the unit disk $\{z:\, |z|<1\}$ with all its vertices
on the unit circle $\{z:\, |z|=1\}$ and a growth process of such
polygons when each $n$-gon generates an $n(n-1)$-gon by inverting itself across
all of its sides. In this paper, we prove some general monotonicity results of inversion for 
convex hyperbolic $n$-gons and 
solve an extremal problem that, among all convex hyperbolic $4$-gons containing the origin,
the inverted side length of the longest side of the given hyperbolic $4$-gon 
is minimal for the regular hyperbolic $4$-gon.
\end{abstract}%

%%%%%%%%%%%%%%%%%%%%%%%%%%%%%%%%%%%%%%%%%%%%%%%%%%%%%%%%%%%%%%%%%%%%%%%%%%%%%%%%%%%%%%%%%%%%%%%%%
\maketitle%

\section{Introduction}

We consider the Poincar\'{e} model of the
hyperbolic plane, that is the unit disk $\mathbb{D}=\{z:\, |z|<1\}$ supplied with
the metric $\d{\sigma}_\mathbb{D}(z)=|\d{z}|/(1-|z|^2)$, $z\in \mathbb{D}$. 
In this model, the {\it hyperbolic geodesics} are circular arcs that are orthogonal to the unit circle $\mathbb{T}=\{z:\; |z|=1\}$.
A set $S\subset\mathbb{D}$ is {\it hyperbolically convex} if for any two points $z_1$ and $z_2$ in $S$, the hyperbolic geodesic
connecting $z_1$ to $z_2$ lies entirely inside $S$. A {\it hyperbolic $n$-gon} is a simply connected subset of $\mathbb{D}$, 
which contains the origin and which is
bounded by a Jordan curve consisting of $n$ hyperbolic geodesics and arcs of
the unit circle $\mathbb{T}$ which form the {\it sides} of a hyperbolic $n$-gon.
In this paper, the considered $D_n$, $n\geq 3$ is a convex hyperbolic $n$-gon on the unit disk $\mathbb{D}$
having all its vertices $A_1,A_2,\ldots,A_n$ on $\mathbb{T}$ 
and ordered in the positive direction of $\mathbb{T}$ such that 
$$ A_1=1,\quad A_j=\exp\left(2\pi i\sum \limits_{k=1}^{j-1}\alpha_k\right),\quad j=2,3,\ldots,n.$$
where $0< \alpha_k< 1/2$ is the angle corresponding to the side $A_k A_{k+1}$
and having its sides on circles orthogonal to $\mathbb{T}$. By $D_n^*$, we denote 
the regular hyperbolic  $n$-gon on the unit disk $\mathbb{D}$ having all its vertices $A_1^*,
A_2^*,A_3^*,\ldots,A_n^*$ on $\mathbb{T}$ and ordered in the positive direction of $\mathbb{T}$ such that
$$ 
A^*_1=1,\quad A_j^*=\exp \left( 2\pi i\sum \limits_{k=1}^{j-1}\alpha_k^*\right), \quad j=2,3,\ldots,n.
$$
where $\alpha_k^*=1/n$ is the angle corresponding to the side $A_k^*A_{k+1}^*$ and having its 
sides on circles orthogonal to $\mathbb{T}$. 
\par
We consider the following realization model of $D_n$ as a biological cell which replicates itself at a discrete
time $s=0,1,2,\ldots$ (see Figure \ref{modelA}).

\begin{itemize}
\item The cell $D_n^{(0)}=D_n$ is the only cell of generation $0$.  %
\item If we reflect $D_n^{(0)}$ with respect to its sides we get
$n$ new non-overlapping hyperbolic $n$-gons $D_{ns}$, $1\leq s\leq
n$ of generation $1$.  %
\item Now every $D_{ns}$ can be reflected with respect to
each of its $n-1$  ``free'' sides to get new $n-1$ $n$-gons of
generation $2$. Altogether we have $n(n-1)$ $n$-gons of generation $2$.  %
\item Continuing we will have $n(n-1)^2$ $n$-gons of
generation $3$, $n(n-1)^3$ $n$-gons of generation $4$, etc.
\end{itemize}

\begin{figure}%
\centering %
%\hspace{-3.5in} %
%\begin{minipage}{0.55\textwidth}%
\includegraphics[scale=0.30]{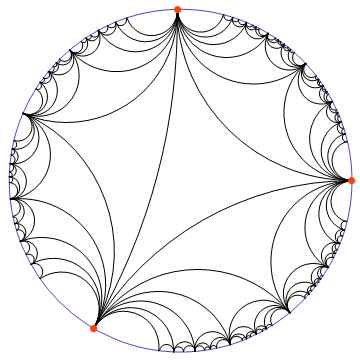}
%\end{minipage}
\caption{Model of cells for $D_3^{(s)}$, $s\geq 0$}
\label{modelA}
\end{figure}

Let $D_n^{(s)}$ be the {\it body} of all generations $\le s$ (i.e.
$D_n^{(s)}$ is again a convex hyperbolic polygon which is precisely the union of 
all convex polygons of generations $j$, $0\le j\le s$). In particular, $D_n^{(1)}$ is a 
convex hyperbolic $n(n-1)$-gon on the unit disk $\mathbb{D}$ 
having all its vertices $B_1,B_2,\ldots,B_{n(n-1)}$ on $\mathbb{T}$
and having its sides on circles orthogonal to $\mathbb{T}$.
Let $\alpha_{j,k}$ be the angle corresponding to the side of $D_n^{(1)}$ obtained by reflecting the 
vertex $A_k$ with respect to the side $A_j A_{j+1}$ of the given $D_n$ where $j=1,2,\ldots,n$. Similarly, by $(D_n^*)^{(1)}$ 
we denote a regular hyperbolic $n(n-1)$-gon on the unit disk $\mathbb{D}$
having all its vertices $B^*_1,B^*_2,\ldots,B^*_{n(n-1)}$ on $\mathbb{T}$ and having its sides on circles orthogonal to $\mathbb{T}$.
Let $\alpha^*_{j,k}$ be the angle corresponding to the side of $(D^*_n)^{(1)}$ obtained by reflecting the 
vertex $A^*_k$ with respect to the side $A^*_j A^*_{j+1}$ of the given $D^*_n$  where $j=1,2,\ldots,n$.
\par
Hyperbolic polygons play not only a significant role in hyperbolic geometry and trigonometry 
but also in the flourishing theory of Fuschian groups, Riemann surfaces and automorphic functions 
\cite{B1,F1,L1}. As a matter of fact, hyperbolic polygons are dense 
in the class of hyperbolically convex regions. Thus on a brighter note, approximation of these regions 
by hyperbolic $n$-gons combined with variational arguments serve as a primary tool to solve
several extremal problems. The regular hyperbolic $n$-gon 
is extremal for many functionals in the hyperbolic plane. {\it Geometrical} results 
include that the regular one maximizes the hyperbolic area among all hyperbolic $n$-gons with a given 
hyperbolic perimeter. Recent results involving functionals of {\it non-geometrical} nature 
such as, eigenvalues of the Laplacian, capacities, conformal radius, harmonic measure etc. 
can be found in \cite{BSH,S1,S2,S3}. 
\par
In this paper, the main result is,

\begin{theorem}\label{thm-main}
Let $\alpha_{j,k}$ and $\alpha^*_{j,k}$ be as defined before for $D_4$.
Then
\begin{equation}\label{eq:1}
\max \limits_{j,k} \alpha_{j,k} \geq \max \limits_{j,k} \alpha^*_{j,k}.
\end{equation}
Equality in (\ref{eq:1}) is attained only in the case when $D_4$ is a regular $4$-gon.
\end{theorem}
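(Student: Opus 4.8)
The plan is to recast the theorem as an explicit extremal problem on the simplex $\Sigma=\{(\alpha_1,\alpha_2,\alpha_3,\alpha_4):\alpha_j>0,\ \sum_j\alpha_j=1,\ \alpha_j<1/2\}$ and to feed it into the monotonicity of inversion established above. Writing $\delta_j=\pi\alpha_j$ for the half arc-angle of the $j$-th side, the first step is to record the two closed forms for the children. When the reflected side $A_kA_{k+1}$ shares a vertex with the mirror side $A_jA_{j+1}$ (an \emph{adjacent} reflection), centering the mirror geodesic turns the inversion into the boundary involution $\tan(\theta'/2)\tan(\theta/2)=\tan^2(\delta_j/2)$, and a short computation gives
\[
\tan\!\big(\pi\alpha_{j,k}\big)=\frac{\sin\delta_j\,\sin\delta_k}{2\sin\delta_k\cos\delta_j+\sin\delta_j\cos\delta_k},
\]
an expression strictly increasing in each of $\delta_j,\delta_k$ and satisfying $\alpha_{j,k}>\alpha_{k,j}$ exactly when $\delta_j>\delta_k$ (so the larger child of a neighbouring pair comes from reflecting the \emph{smaller} side across the larger). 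For the single non-adjacent (\emph{opposite}) reflection one reflects both endpoints and obtains an analogous but more singular expression. A second ingredient, used throughout, is the conservation identity $\sum_{k\ne j}\alpha_{j,k}=\alpha_j$, which records that the three children produced across side $j$ partition the boundary arc subtended by that side.

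The second step fixes the target value. At $D_4^*$ every $\delta_j=\pi/4$, the adjacent children all equal $\tfrac1{2\pi}\arctan\tfrac34$, while each opposite child equals the strictly smaller $\tfrac1{2\pi}\arctan\tfrac7{24}$; hence $\max_{j,k}\alpha^*_{j,k}=\tfrac1{2\pi}\arctan\tfrac34$ and the maximum is attained only by adjacent reflections. Since the functional $M(\alpha):=\max_{j,k}\alpha_{j,k}$ is invariant under the dihedral relabelings of the four sides and $D_4^*$ is the unique fixed point of that action, $D_4^*$ is the natural candidate minimizer, and the theorem amounts to $\min_{\Sigma}M=M(\alpha^*)$ with the minimum attained only there.

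The third step localizes the minimizer. The conservation identity shows that $M(\alpha)\le M$ forces $\alpha_j\le 3M$ for every $j$, so a minimizer cannot have any side too large; conversely, as the polygon degenerates toward $\partial\Sigma$ the opposite child tends to the full arc-angle $\pi$ (that is, $\alpha_{j,k}\to\tfrac12$), so $M$ blows up there. Consequently any minimizer lies in a compact interior region on which the four $\delta_j$ are comparable and, by the regular-case computation together with continuity, the opposite children stay dominated by the adjacent ones. On this region $M(\alpha)=\max_{(j,k)\ \mathrm{adjacent}}\alpha_{j,k}$, and by the ordering property above this reduces to a maximum over the four cyclically adjacent pairs, each contribution being the reflection of the smaller member across the larger.

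The final and hardest step is to prove that on this region $M$ strictly exceeds $\tfrac1{2\pi}\arctan\tfrac34$ unless $\alpha=\alpha^*$. Since the adjacent children are increasing in both arguments and the $\delta_j$ sum to a constant, I would run an equalization (majorization) scheme: from any non-regular configuration, an arc-preserving transfer between a largest and a smallest side is shown — using the explicit formula and the sum constraint — to leave the largest adjacent child no smaller, and strictly larger unless the configuration was already regular, while verifying that the transfer does not let an opposite child overtake it and create a spurious competitor. The genuine obstacle is exactly this coupling: the adjacent children alone are \emph{minimized by skewing} the polygon (they can be driven to $0$), whereas the opposite children are \emph{minimized by balancing} it, so the minimax sits precisely at the balance point. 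Making this trade-off quantitative — proving that no transfer away from $\alpha^*$ can decrease all twelve children at once — is the crux, and I expect it to require the closed forms above together with a convexity estimate for the right-hand side near the diagonal $\delta_j=\pi/4$, rather than a soft symmetry argument alone.
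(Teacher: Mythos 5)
Your setup is correct and in fact quite clean: the closed form you give for an adjacent reflection is right (it simplifies to $\tan(\pi\alpha_{j,k})=\bigl(2\cot\delta_j+\cot\delta_k\bigr)^{-1}$, which makes the monotonicity in $\delta_j,\delta_k$ and the ordering $\alpha_{j,k}>\alpha_{k,j}\iff\delta_j>\delta_k$ immediate — the latter agreeing with Lemma \ref{lem 05}(ii) of the paper), the conservation identity $\sum_{k\neq j}\alpha_{j,k}=\alpha_j$ is correct, and your regular-case values $\tfrac{1}{2\pi}\arctan\tfrac34$ (adjacent) and $\tfrac{1}{2\pi}\arctan\tfrac{7}{24}$ (opposite) check out. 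However, the proposal has a genuine gap at exactly the point that constitutes the theorem: you never prove that every non-regular configuration has a child strictly exceeding $\tfrac{1}{2\pi}\arctan\tfrac34$. Steps one through three are reductions; step four — the equalization/majorization scheme, the verification that an arc-preserving transfer cannot decrease the largest adjacent child while keeping the opposite children dominated — is only described as something you ``would run,'' and you yourself flag making it quantitative as ``the crux.'' Since the whole difficulty of the extremal problem lives in that trade-off (skewing shrinks adjacent children but inflates opposite ones), what you have submitted is a correct reformulation plus a plan, not a proof. A first-order computation near the diagonal does show $D_4^*$ is a strict local minimizer of $\max_{j,k}\alpha_{j,k}$, but the global statement and the equality case are untouched.

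There is also a secondary error in the localization step: the claim that ``as the polygon degenerates toward $\partial\Sigma$ the opposite child tends to $\tfrac12$'' is false as a blanket statement. If, say, $\alpha_1\to\tfrac12$ with $\alpha_2,\alpha_3,\alpha_4\to\tfrac16$, the mirror on side $1$ tends to a diameter, its inversion tends to a Euclidean reflection, and all of its children stay near $\tfrac16$; nothing tends to $\tfrac12$. (Your conclusion can be salvaged, since $\tfrac16>\tfrac{1}{2\pi}\arctan\tfrac34$, and opposite children genuinely do blow up only along skewed degenerations such as $\alpha_1,\alpha_3\to 0$, $\alpha_2,\alpha_4\to\tfrac12$; but the compactness argument needs to be rewritten around the correct case analysis.) For comparison, the paper avoids closed forms and compactness altogether: it assumes an extremal configuration, uses continuous vertex movements together with the monotonicity statements of Lemma \ref{lem 05}(ii),(iii) to rule out any configuration whose longest inverted sides arise asymmetrically, and then pins down $D_4^*$ via Lemmas \ref{lem-adj} and \ref{lem-opp}. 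Your analytic route could in principle yield a sharper, more quantitative result (and would be the natural path toward Conjecture \ref{con2}), but as it stands the decisive inequality is missing.
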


The proof of Theorem \ref{thm-main} given in Section $4$ is {\it geometric} and is based 
on the monotonicity results of the inverted sides of the hyperbolic $n$-gon 
discussed in Section $3$. Some well known preliminary results which assist the proof of the results described in the later sections 
are summarized in Section $2$. Section $3$ contains several interesting results related to hyperbolic $n$-gons as 
in the considered model, the inversion of a regular hyperbolic $n$-gon does not always 
produces a regular hyperbolic $n$-gon and the inversion of a non-regular hyperbolic $n$-gon never produce a 
regular hyperbolic $n$-gon. In Section $5$, we discuss a more general geometric problem posed by A.~Solynin.

%%%%%%%%%%%%%%%%%%%%%%%%%%%%%%%%%%%%%%%%%%%%%%%%%%%%%%%%%%%%%%%%%%%%%%%%%%%%%%%%%%%%%%%%%%%%%%%%%%%%%%%%%%%%%%%%%%%%%%%%%%
\section{Preliminaries}
\subsection{Circular Inversion}
Let $\Gamma$ be a fixed circle in the plane with center $O$ and radius $r$. Then the inverse of a point $P$ with respect to 
$\Gamma$ is the point $P'$ lying on the ray from $O$ through $P$ such that $|OP||OP'|=r^2$. Then, we say $P'$ is obtained 
from $P$ by {\it circular inversion} with respect to circle $\Gamma$. This is extensively studied in \cite{H1}. We use the following
two theorems to obtain some interesting results for hyperbolic $n$-gons. The proofs can be found in \cite{H1}, Chapter $7$, Section $37$.

\begin{proposition}
If a circle $\gamma$ is orthogonal to $\Gamma$ {\rm(}at its intersection points{\rm)}, then $\gamma$ is transformed into 
itself by circular inversion in $\Gamma$. Conversely, if a circle $\gamma$ contains a single pair $A$, $A'$ of 
inverse points, then $\gamma$ is orthogonal to $\Gamma$ and is sent into itself. 
\end{proposition}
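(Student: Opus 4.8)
The plan is to reduce both implications to a single metric criterion for orthogonality, expressed through the power of the center $O$ with respect to $\gamma$. Write $C$ and $\rho$ for the center and radius of $\gamma$, and recall that $\Gamma$ has center $O$ and radius $r$. First I would record the standard fact that $\gamma$ meets $\Gamma$ orthogonally if and only if, at a point of intersection $X$, the radius $OX$ of $\Gamma$ and the radius $CX$ of $\gamma$ are perpendicular; applying the Pythagorean theorem in the right triangle $OXC$ converts this into the purely metric condition $|OC|^2 = r^2 + \rho^2$. Equivalently, the power of $O$ with respect to $\gamma$, namely $|OC|^2 - \rho^2$, equals $r^2$. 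This identity is the hinge on which both directions turn.

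For the forward direction, assume $\gamma \perp \Gamma$, so the power of $O$ with respect to $\gamma$ equals $r^2 > 0$; in particular $O$ lies outside $\gamma$ and not on it. I would take any $P \in \gamma$ and consider the line through $O$ and $P$: it meets $\gamma$ in a second point $Q$ (with $Q = P$ allowed when the line is tangent), and since $O$ is exterior, $P$ and $Q$ lie on a common ray from $O$. The power relation then gives $|OP|\,|OQ| = r^2$, which is precisely the defining equation of inversion in $\Gamma$; hence $Q = P'$, the inverse of $P$, and $P' \in \gamma$. As $P$ was arbitrary, inversion carries $\gamma$ into itself, and since inversion is an involution the image is all of $\gamma$.

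For the converse, suppose $\gamma$ passes through a pair of distinct inverse points $A$ and $A'$. By definition $A'$ lies on the ray $OA$, so the line $OA$ meets $\gamma$ in the two points $A$ and $A'$ and in no others; both lie on the same side of $O$, whence $O$ is exterior to $\gamma$ and its power equals $|OA|\,|OA'|$. But $A$ and $A'$ being inverse means exactly $|OA|\,|OA'| = r^2$, so the power of $O$ with respect to $\gamma$ is $r^2$. By the criterion recorded above this forces $|OC|^2 = r^2 + \rho^2$, i.e.\ $\gamma \perp \Gamma$, and the first part then shows $\gamma$ is sent into itself.

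The step requiring the most care is the bookkeeping of signs in the power of a point: one must check that $O$ is genuinely exterior to $\gamma$ (so that the product of distances equals the \emph{positive} power $r^2$, not its negative), and that the two relevant points of $\gamma$ on a line through $O$ lie on one ray, so that the unsigned product $|OP|\,|OQ|$ coincides with the inversion relation rather than differing by a sign. Once these positions are pinned down, both implications drop out of the single equation ``power of $O$ equals $r^2$.'' I would also flag the degenerate exclusion $A \neq A'$ in the converse: a coincident pair would merely assert $A \in \Gamma$ and carries no information about $\gamma$.
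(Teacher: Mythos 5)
Your proof is correct. Note that the paper itself gives no argument for this proposition --- it is stated as a preliminary and deferred to the cited reference (Hartshorne, \emph{Geometry: Euclid and Beyond}, Chapter 7, Section 37), where the proof is the classical power-of-a-point argument. Your write-up is essentially that same argument in a mild variant: you encode orthogonality by the metric identity $|OC|^2 = r^2 + \rho^2$, i.e.\ ``power of $O$ with respect to $\gamma$ equals $r^2$,'' whereas Hartshorne phrases the power of $O$ as the square of the tangent length from $O$ to $\gamma$ and identifies the tangent point as lying on $\Gamma$; the two formulations are interchangeable, and your handling of the sign/position issues (that $O$ is exterior, that the two intersection points lie on one ray, and that $A \neq A'$ is needed for the converse) is exactly the care the classical proof requires.
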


\begin{proposition}
If $P, P'$ and $Q, Q'$ are pair of inverse points with respect to some
circle with center at $O$ and radius $r$, then 
$$|P'Q'| =  \frac{r^2 |PQ|}{|OP| |OQ|}.$$
\end{proposition}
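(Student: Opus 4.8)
The plan is to prove the identity by producing a pair of \emph{similar triangles} out of the inversion. Write $O$ for the center and $r$ for the radius of the circle of inversion. By the definition of inversion, $P'$ lies on the ray $OP$ and $Q'$ lies on the ray $OQ$, with
\[
|OP|\,|OP'| = r^2 = |OQ|\,|OQ'|.
\]
First I would dispose of the degenerate case in which $O$, $P$, $Q$ are collinear: then $P'$, $Q'$ lie on the same line through $O$, and the claimed equality reduces to a direct computation of lengths along that line. So assume $O$, $P$, $Q$ are not collinear, so that $OPQ$ is a genuine triangle.

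From the displayed relation, $|OP|\,|OP'| = |OQ|\,|OQ'|$, and hence the sides around $O$ are in proportion:
\[
\frac{|OP|}{|OQ'|} = \frac{|OQ|}{|OP'|}.
\]
The key step is to compare the triangles $OPQ$ and $OQ'P'$. They share the angle at $O$, since $\angle POQ = \angle Q'OP'$ (because $P'$, $Q'$ lie on the rays $OP$, $OQ$, so these rays coincide), and the two sides enclosing this common angle are in proportion by the displayed equation, under the correspondence $OP \leftrightarrow OQ'$ and $OQ \leftrightarrow OP'$. By the SAS similarity criterion, $\triangle OPQ \sim \triangle OQ'P'$.

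It then remains only to read off the ratio of similarity and apply it to the third pair of corresponding sides. That ratio is $|OP|/|OQ'|$, and substituting $|OQ'| = r^2/|OQ|$ gives $|OP|/|OQ'| = |OP|\,|OQ|/r^2$. Since $PQ$ corresponds to $Q'P'$ (the sides opposite $O$), we obtain
\[
\frac{|PQ|}{|P'Q'|} = \frac{|OP|\,|OQ|}{r^2},
\]
which rearranges to the asserted formula $|P'Q'| = r^2\,|PQ|/(|OP|\,|OQ|)$.

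The only real subtlety is bookkeeping: one must use the \emph{crossed} correspondence $\triangle OPQ \sim \triangle OQ'P'$ rather than $\triangle OPQ \sim \triangle OP'Q'$, since it is the products $|OP||OP'|$ and $|OQ||OQ'|$ that equal $r^2$. To sidestep both this and the collinear case at once, I would alternatively place $O$ at the origin and use $P' = (r^2/|P|^2)P$ and $Q' = (r^2/|Q|^2)Q$; expanding $|P'-Q'|^2 = r^4\,\bigl| P/|P|^2 - Q/|Q|^2 \bigr|^2$ and combining the three resulting terms over the common denominator $|P|^2|Q|^2$ collapses it to $r^4\,|P-Q|^2/(|P|^2|Q|^2)$, so that taking square roots gives the formula in full generality.
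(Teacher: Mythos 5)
Your proof is correct, and it is essentially the argument of record: the paper itself does not prove this proposition but defers to Hartshorne (Chapter 7, Section 37), where the proof is exactly your crossed similarity $\triangle OPQ \sim \triangle OQ'P'$, obtained from the common angle at $O$ together with $|OP|\,|OP'| = |OQ|\,|OQ'| = r^2$. Your added care with the collinear case, and the coordinate identity $|P'-Q'|^2 = r^4|P-Q|^2/(|P|^2|Q|^2)$ that handles all cases uniformly, make your write-up more self-contained than the citation the paper offers, but the underlying idea is the same.
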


\subsection{Majorization}

Let ${\bf x},{\bf y}$ be the vectors in $\mathbb{R}^n$. Let $x_{(1)}$ be the largest element in 
${\bf x}$, $x_{(2)}$ be the second largest element, and so on. The vector ${\bf x}$ is said to be 
majorize the vector ${\bf y}$ (denoted ${\bf x} \succ {\bf y}$) if $\sum _{i=1}^k x_{(i)} 
\geq \sum_ {i=1}^k y_{(i)}$ for $k=1,2,\ldots,n-1$ and $\sum _{i=1}^n x_i =\sum _{i=1}^n y_i$.
\par
A real-valued function $\phi$ defined on a set $\mathcal{A}\subset \mathbb{R}^n$ is said to be 
{\it Schur-convex} ({\it Schur-concave}) on $\mathcal{A}$ if
$$ 
{\bf x} \succ {\bf y}\ {\rm on}\ \mathcal{A} \Rightarrow \phi(x)\leq \phi(y)\; (\phi(x)\geq \phi(y)).
$$

A detailed exposition of the properties of majorization and the proof of the following well known theorem by 
Schur, Hardy, Littlewood, P\'{o}lya is given in \cite{MOA}. 
We shall apply Theorem \ref{thm11} to study a more general problem in Section $5$. 

\begin{theorem}\label{thm11}
If $U \subset \mathbb{R}^n$, where $U$ is an open set in $\mathbb{R}^n$, and $g: U \rightarrow \mathbb{R}$ is convex {\rm (}concave{\rm)}, then
$$
\phi({\bf x})= \sum_{i=1}^n g(x_i)
$$
is Schur-convex {\rm(}Schur-concave{\rm )} on $U$, where ${\bf x}=(x_1,x_2,\ldots,x_n)$. Consequently,
$$
 {\bf x} \prec {\bf y}\; ( {\bf x} \succ {\bf y})\ {\rm on}\ U \Rightarrow \phi({\bf x}) \leq \phi({\bf y})
$$
\end{theorem}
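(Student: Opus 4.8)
The plan is to prove the stated result by reducing the $n$-variable majorization comparison to the elementary two-variable case, where it is an immediate consequence of the convexity of $g$. I read the hypothesis in the natural way: $g$ is a convex function of one real variable on an interval $I$ and the admissible set is $U=I^n$, so that $\phi(\mathbf x)=\sum_{i=1}^n g(x_i)$ is defined and symmetric. The concrete assertion I aim at is the one displayed at the end of the statement, namely that $\mathbf x \prec \mathbf y$ implies $\phi(\mathbf x)\le\phi(\mathbf y)$ when $g$ is convex (and the reverse inequality when $g$ is concave); this is exactly the Schur-convexity (Schur-concavity) of $\phi$.

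First I would invoke the Hardy--Littlewood--P\'olya characterization of majorization: if $\mathbf x \prec \mathbf y$, then $\mathbf x$ is obtained from $\mathbf y$ by a finite sequence of \emph{T-transforms}, each of which fixes all but two coordinates and replaces a pair $(y_p,y_q)$ with $y_p\ge y_q$ by
\[
(y_p',y_q')=\bigl(\lambda y_p+(1-\lambda)y_q,\ (1-\lambda)y_p+\lambda y_q\bigr),\qquad \lambda\in[\tfrac12,1].
\]
Such a step preserves the pairwise sum and shrinks the spread (a ``Robin Hood'' averaging), and since each new coordinate is a convex combination of $y_p,y_q\in I$, the whole chain of intermediate vectors stays in $U=I^n$, so $\phi$ is defined at every stage.

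The single place where convexity enters is the two-point estimate. Writing $y_p'=\lambda y_p+(1-\lambda)y_q$ and $y_q'=(1-\lambda)y_p+\lambda y_q$ and applying convexity to each,
\[
g(y_p')\le\lambda g(y_p)+(1-\lambda)g(y_q),\qquad g(y_q')\le(1-\lambda)g(y_p)+\lambda g(y_q),
\]
and adding gives $g(y_p')+g(y_q')\le g(y_p)+g(y_q)$. All other summands are unchanged, so one T-transform does not increase $\phi$; telescoping over the finite sequence leading from $\mathbf y$ to $\mathbf x$ yields $\phi(\mathbf x)\le\phi(\mathbf y)$, which is the convex case. The concave case is identical applied to $-g$, reversing every inequality.

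The main obstacle is the T-transform reduction itself: proving that $\mathbf x \prec \mathbf y$ can be realized by finitely many Robin Hood averagings is the genuine combinatorial content, and I would establish it by induction on the number of coordinates at which the decreasingly sorted $\mathbf x$ and $\mathbf y$ disagree, at each stage averaging a well-chosen pair of $\mathbf y$-coordinates so as to force agreement in one more position while keeping $\mathbf x \prec \mathbf y' \prec \mathbf y$. Once this lemma is granted, the rest is the routine two-point inequality above. I would also record the quicker but less general alternative via Schur's differential criterion: a symmetric $C^1$ function is Schur-convex precisely when $(x_i-x_j)\bigl(\partial_i\phi-\partial_j\phi\bigr)\ge 0$, and for $\phi=\sum g(x_i)$ this reduces to $(x_i-x_j)\bigl(g'(x_i)-g'(x_j)\bigr)\ge 0$, i.e. to $g'$ being nondecreasing, i.e. to $g$ being convex; this route needs differentiability, whereas the T-transform argument uses only convexity, so I would keep the latter as the primary proof.
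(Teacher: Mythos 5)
Your argument is correct, but there is nothing in the paper to compare it against: the paper does not prove this theorem at all. It is quoted as the classical result of Schur, Hardy, Littlewood and P\'olya, with the proof explicitly deferred to the monograph \cite{MOA}. Your T-transform proof is, in substance, the standard textbook argument (one of the routes taken in that reference, where majorization is generated either by Robin Hood transfers or, equivalently, by doubly stochastic matrices): the two-line convexity estimate for a single transfer, plus the Hardy--Littlewood--P\'olya lemma that $\mathbf x\prec\mathbf y$ is realized by finitely many such transfers, which you rightly identify as the genuine content and sketch by the usual induction on the number of agreeing sorted coordinates. Two details in your write-up deserve explicit mention. First, you silently repaired the statement's typing: with $U\subset\mathbb{R}^n$ open and $g:U\to\mathbb{R}$, the expression $\sum_{i=1}^n g(x_i)$ does not parse; the correct hypothesis, and your reading, is that $g$ is convex on an interval $I\subset\mathbb{R}$ and $\phi$ is Schur-convex on $I^n$, and your observation that the whole T-transform chain stays inside $I^n$ (each new coordinate being a convex combination of old ones) is exactly where this product structure is needed. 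Second, your proof establishes the implication in the form displayed at the end of the theorem, $\mathbf x\prec\mathbf y\Rightarrow\phi(\mathbf x)\le\phi(\mathbf y)$ for convex $g$, which is the standard convention; note that the definition of Schur-convexity given in Section 2 of the paper has the majorization arrow reversed relative to this, so your proof matches the theorem as intended rather than that definition as literally stated. Your fallback via the Schur--Ostrowski criterion is also valid for differentiable $g$, and preferring the T-transform argument, which needs no smoothness, is the right call.
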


%%%%%%%%%%%%%%%%%%%%%%%%%%%%%%%%%%%%%%%%%%%%%%%%%%%%%%%%%%%%%%%%%%%%%%%%%%%%%%%%%%%%%%

\section{General results for hyperbolic $n$-gons}

\begin{figure}%
\centering %
%\hspace{-5in} %
\begin{minipage}{0.5\textwidth}%
\includegraphics[scale=0.4]{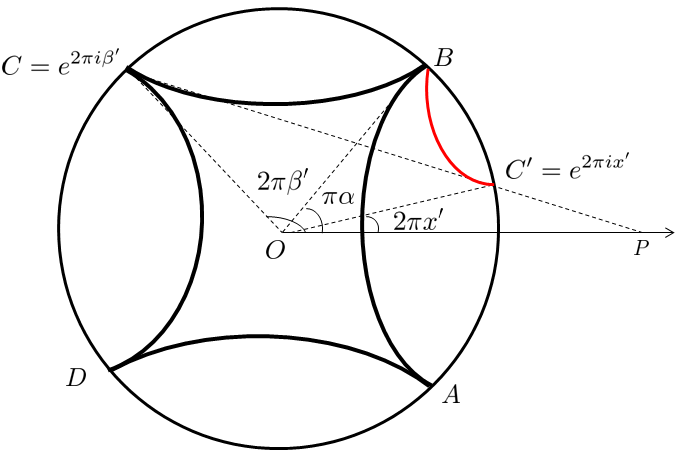}
\end{minipage}
\caption{Formula of the inverse point $C'$}
\label{fig3}
\end{figure}

In this section, we shall first generate a formula of an inverse point with respect to a side of a hyperbolic $n$-gon. 
Suppose $C' :=e^{2\pi i x}$ is the inverse point of $C :=e^{2\pi i\beta}$ with respect to the side $AB$ lying on the circle 
whose center is at P and radius $r$. Define
$A:=e^{2\pi i a}$ and $B:=e^{2\pi i b}$ where $a= \sum _{i=1}^m \alpha_i$ for some fixed $m$ and  
$b=a+\frac{\alpha}{2}$.
Thus, in order to obtain a general formula, we apply the transformation $z \mapsto e^{-2\pi i b}z$ which maps $A$ to $e^{-2\pi i\alpha}$ and 
$B$ to $e^{2\pi i\alpha}$, which symmetrically places $A$ and $B$ with respect to the real axis. To ease our notations, we define
$\beta':=\beta-b$ and $x':=x-b$ for the transformed polygon (See Figure \ref{fig3}). 
Clearly, $OP=\sec \pi \alpha$, $r=\tan \pi \alpha$.
Let us denote the vectors $\overrightarrow{OC}$, $\overrightarrow{OC'}$ by complex numbers $z$ and $z^*$ respectively. Note that,
$z^{*}= e^{2\pi i x'}$ and $z=e^{2\pi i\beta'}$. Then 
$\overrightarrow{OP}+\overrightarrow{PC'}=\overrightarrow{OC'} \Rightarrow \overrightarrow{PC'}=z^{*}-\sec \pi \alpha$
and similarly, $\overrightarrow{PC}=z-\sec \pi \alpha$. Thus, we have $|PC'||PC| = \tan^2 \pi \alpha$. This implies
$\vert z^{*}-\sec \pi \alpha \vert \vert z-\sec \pi \alpha\vert =\tan^2 \pi \alpha$. Thus $\vert z^{*}-\sec\pi \alpha \vert =\tan^2 \pi \alpha/\vert z-\sec \pi\alpha\vert$.
Suppose $\hat{\mu}$ denotes the unit vector in the direction of $\overrightarrow{PC}$ (and so for $\overrightarrow{PC'}$). Therefore, we obtain
\begin{align*}
 e^{2\pi ix'} &=z^{*} = \overrightarrow{OC'}=\overrightarrow{OP}+\overrightarrow{PC'} = \sec \pi \alpha + \hat{\mu} \vert \overrightarrow{PC'} \vert \\
&=\sec \pi \alpha +\frac{z-\sec \pi \alpha}{\vert z-\sec \pi\alpha\vert} \vert z^{*}-\sec \pi \alpha\vert = \sec \pi\alpha +\frac{\tan^2 \pi\alpha}{\overline{z}-\sec \pi\alpha} \\
& = \sec \pi\alpha +\frac{\tan ^2 \pi\alpha}{e^{-2\pi i \beta'}-\sec \pi\alpha}= \frac{1}{\cos \pi\alpha}+\frac{e^{2\pi i\beta'} \sin^2 \pi\alpha}{(\cos \pi\alpha)(\cos \pi\alpha-e^{2\pi i\beta'})}\\
& = \frac{1-e^{2\pi i\beta'}\cos \pi \alpha}{\cos \pi\alpha-e^{2\pi i\beta'}}.\\
\end{align*}

Hence, we obtain
\begin{equation}
\label{eq4}
e^{2\pi ix'}=-e^{2\pi i\beta'} \frac{\cos \pi \alpha-e^{-2\pi i\beta'}}{\cos \pi \alpha-e^{2\pi i\beta'}}.
\end{equation}

Substituting $\beta'$ by $\beta-b$ and $x'$ by $x-b$ in (\ref{eq4}), we obtain
\begin{equation}\label{eq:imp}
e^{2\pi ix}= -e^{2\pi i \beta}\ \frac{\cos \pi\alpha -e^{-2\pi i(\beta-b)}}{\cos \pi \alpha-e^{2\pi i(\beta-b)}}.
\end{equation}
If $\beta-b >0$, considering the argument on both sides of (\ref{eq:imp}), we obtain
\begin{equation}\label{eq1}
2\pi x=-\pi+2\pi \beta+ 2\tan^{-1} \left(\frac{\sin 2\pi (\beta-b)}{\cos \pi\alpha- \cos 2\pi (\beta-b)} \right). 
\end{equation}
If $\beta-b<0$, considering the argument on both sides of (\ref{eq:imp}), we obtain
\begin{equation}\label{eq2}
2\pi x=\pi+2\pi \beta- 2\tan^{-1} \left(\frac{\sin 2\pi (b-\beta)}{\cos \pi \alpha- \cos 2\pi (b-\beta)} \right).
\end{equation}

Thus, the inverse point with respect to a side of a hyperbolic $n$-gon is given by $e^{2\pi ix}$ where $2\pi x$ 
is given as in (\ref{eq1}) and (\ref{eq2}).

\begin{figure}%
\centering %
%\hspace{-5in} %
\begin{minipage}{0.5\textwidth}%
\includegraphics[scale=0.4]{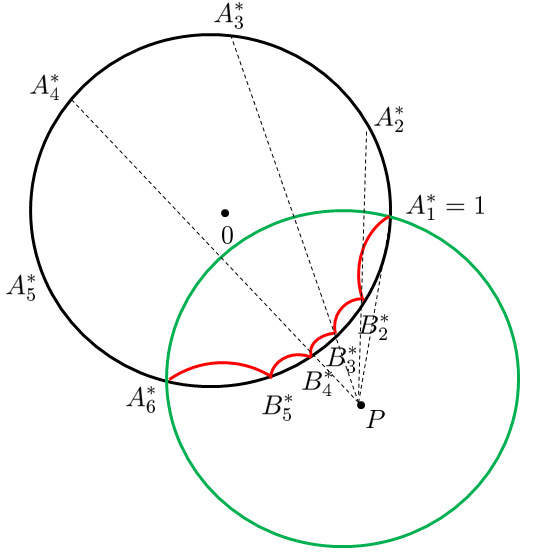}
\end{minipage}
\caption{Monotonicity of inverted sides with respect to one fixed side of $D_6^*$}
\label{fig4}
\end{figure}

\begin{lemma}\label{lem9}
For fixed $j$, where $1\leq j \leq n$, $\alpha_{j,k}^*$ is monotonically decreasing for $k\neq j$ and 
$1\leq k\leq \left \lceil{\frac{n+1}{2}}\right \rceil$ {\rm(}$1\leq k\leq \left \lceil{\frac{n}{2}}\right \rceil${\rm)}, 
if $n$ is even {\rm(}odd{\rm)}. 
\end{lemma}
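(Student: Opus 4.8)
The plan is to reduce everything to a single reflection and to the monotonicity of one real function. By the rotational symmetry of $D_n^*$ I may assume the reflecting side $A_j^*A_{j+1}^*$ is placed symmetrically about the real axis, so that its endpoints are $e^{\pm i\pi/n}$ and the orthogonal circle carrying it has centre $\sec(\pi/n)$ and radius $\tan(\pi/n)$, exactly as in the normalisation preceding (\ref{eq4}) with $\alpha=1/n$. Reflection in this side restricts to an orientation-reversing map $\Phi$ of the boundary that fixes $e^{\pm i\pi/n}$ and carries the long boundary arc onto the short one; writing a boundary point as $e^{i\psi}$, formula (\ref{eq1}) gives $\Phi(\psi)=-\pi+\psi+2\arctan\!\big(\sin\psi/(\cos(\pi/n)-\cos\psi)\big)$ for $\psi\in(\pi/n,\pi)$, with the remaining arc handled by the symmetry $\psi\mapsto 2\pi-\psi$. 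Since the vertices of $D_n^*$ sit at $\psi=(2m-1)\pi/n$, each $\alpha_{j,k}^*$ is the normalised length of the image under $\Phi$ of the $k$-th side, i.e.
\[
\alpha_{j,k}^*=\frac1{2\pi}\int_{\psi_k}^{\psi_k+2\pi/n}|\Phi'(\psi)|\,\d{\psi},
\]
an integral of $|\Phi'|$ over a window of fixed width $2\pi/n$ that slides away from the fixed point $\psi=\pi/n$ as $k$ increases.

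First I would differentiate $\Phi$. A direct computation collapses the $\arctan$ term and yields
\[
\Phi'(\psi)=\frac{-\sin^2(\pi/n)}{\,1+\cos^2(\pi/n)-2\cos(\pi/n)\cos\psi\,},
\]
whose denominator is strictly positive and strictly increasing on $(0,\pi)$, since its $\psi$-derivative equals $2\cos(\pi/n)\sin\psi>0$ there (here $\cos(\pi/n)>0$ because $n\ge 3$). Hence $|\Phi'|$ is strictly decreasing on $(\pi/n,\pi)$ and, being a function of $\cos\psi$, is symmetric about $\psi=\pi$; it equals $1$ at the fixed point $\psi=\pi/n$ and attains its minimum at the antipodal direction $\psi=\pi$. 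Geometrically this says the reflected sides are longest next to the endpoints $A_j^*,A_{j+1}^*$ and shortest at the side facing the mirror.

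With this in hand the monotonicity is a sliding-window estimate. Normalising $j$ so that $k$ counts sides outward from the mirror, consecutive reflected sides occupy adjacent tiles $[\psi_k,\psi_k+2\pi/n]$ and $[\psi_k+2\pi/n,\psi_k+4\pi/n]$ of the long arc. As long as both tiles lie in $(\pi/n,\pi)$ the later tile is pointwise closer to $\pi$, so $|\Phi'|$ is pointwise smaller there and $\alpha_{j,k}^*>\alpha_{j,k+1}^*$ follows at once. The only delicate step is the last one, where the window reaches the side facing the mirror: for even $n$ that window is centred at $\psi=\pi$, and I would compare it with the preceding tile by splitting it symmetrically about $\pi$ and bounding each half by the boundary value $|\Phi'(\pi-\pi/n)|$, using strict monotonicity of $|\Phi'|$ to keep the inequality strict. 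Tracking whether a side straddles $\psi=\pi$ (even $n$) or a vertex sits at $\psi=\pi$ (odd $n$) is precisely what fixes the stopping index at $\lceil (n+1)/2\rceil$ and $\lceil n/2\rceil$ respectively.

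I expect the main obstacle to be this final straddling comparison for even $n$, together with the parity bookkeeping that converts ``window reaches the antipodal direction'' into the stated index range. The one technical point to watch is the branch of $\arctan$ in (\ref{eq1})--(\ref{eq2}), which I sidestep by working only on the upper half-arc $(\pi/n,\pi)$, where the denominator $\cos(\pi/n)-\cos\psi$ is positive and $\Phi$ is smooth, and invoking the reflection symmetry $\psi\mapsto 2\pi-\psi$ for the lower half; the strictness of every inequality, and hence the strict decrease asserted in the lemma, is inherited from $2\cos(\pi/n)\sin\psi>0$ on $(0,\pi)$.
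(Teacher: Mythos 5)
Your proof is correct, but it takes a genuinely different route from the paper's. The paper's argument is synthetic and avoids calculus entirely: writing $l_k$ for the Euclidean distance from the inversion centre $P$ to the vertex $A_k^*$, it applies the chord-length formula for inversion from Section 2, $m_k = r^2\vert A_k^*A_{k+1}^*\vert/(l_k l_{k+1})$, notes that regularity makes all chords $\vert A_k^*A_{k+1}^*\vert$ equal while $l_1<l_2<\cdots$, and concludes $m_1>m_2>\cdots$ at once; since the unit circle is orthogonal to the mirror, decreasing chords between the reflected vertices on $\mathbb{T}$ translate directly into decreasing arcs, i.e. decreasing $\alpha^*_{j,k}$, with no parity distinction and no special treatment of the side facing the mirror. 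You instead differentiate the boundary restriction $\Phi$ of the reflection, obtaining $\vert\Phi'(\psi)\vert=\sin^2(\pi/n)/\bigl(1+\cos^2(\pi/n)-2\cos(\pi/n)\cos\psi\bigr)$, show this is strictly decreasing on $(0,\pi)$ and symmetric about $\psi=\pi$, and compare sliding-window integrals of width $2\pi/n$, with a separate splitting argument when the last window straddles $\psi=\pi$ for even $n$. I verified both your formula for $\Phi'$ (the arctangent term does collapse as claimed, and the branch issue is genuinely avoided on $(\pi/n,\pi)$ where $\cos(\pi/n)-\cos\psi>0$) and the straddling comparison, namely $\int_{\pi-\pi/n}^{\pi+\pi/n}\vert\Phi'\vert = 2\int_{\pi-\pi/n}^{\pi}\vert\Phi'\vert < (2\pi/n)\,\vert\Phi'(\pi-\pi/n)\vert < \int_{\pi-3\pi/n}^{\pi-\pi/n}\vert\Phi'\vert$; both are sound. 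What your approach buys is an explicit pointwise density for the boundary distortion of a single reflection, quantitative information the chord argument never produces and which could plausibly be reused for the area questions in Conjectures \ref{con1} and \ref{con2} of Section 5; what it costs is the extra case analysis at $\psi=\pi$ and the branch bookkeeping, which the paper's three-line chord computation sidesteps entirely.
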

\begin{proof}
It is sufficient to prove the theorem for $j=n$ and when $n$ is even (see Figure \ref{fig4}). 
We can repeat the arguments to prove the result when $n$ is odd. Let $A_1^*A_n^*$ be the side of $D_n^*$ corresponding to the angle $\alpha_n^*$
which lies on the circle whose center is at P and has radius $r$. Due to symmetry it is sufficient to consider  
$l_1, l_2, \ldots, l_{\left \lceil{\frac{n+1}{2}}\right \rceil}$, the lines joining $P$ to the vertices of 
$D_n^*$, $A^*_1, A^*_2,\ldots, A^*_{\left \lceil{\frac{n+1}{2}}\right \rceil}$ respectively. 
Let $B^*_2, B^*_3,\ldots, B^*_{\left \lceil{\frac{n+1}{2}}\right \rceil}$ be the inverse points of the points
 $A^*_2, A^*_3,\ldots, A^*_{\left \lceil{\frac{n+1}{2}}\right \rceil}$ 
respectively with respect to the side $A^*_1 A^*_n$. Let $m_1, m_2, \ldots, m_{\left \lfloor{\frac{n+1}{2}}\right \rfloor}$ 
denote \linebreak $|A^*_1 B^*_2|, |B^*_2 B^*_3|,\ldots, |B^*_{\left \lfloor{\frac{n+1}{2}}\right \rfloor}B^*_{\left \lceil{\frac{n+1}{2}}\right \rceil}|$ respectively.
 Since the sides of the hyperbolic $n$-gon are orthogonal to $\mathbb{T}$, it is sufficient to show that 
 $m_1>m_2>\ldots> m_{\left \lfloor{\frac{n+1}{2}}\right \rfloor}$. This will further imply that 
 $\alpha^*_{n,2}>\alpha^*_{n,3}>\ldots> \alpha^*_{n,\left \lceil{\frac{n+1}{2}}\right \rceil}$. 
 Since $D^*_n$ is a regular hyperbolic $n$-gon, then $|A^*_1A^*_2|=|A^*_2A^*_3|=\ldots=|A^*_{n-1}A^*_1|$. Also, it is straightforward to note that 
 $l_1< l_2< \ldots <l_{\left \lceil{\frac{n+1}{2}}\right \rceil}$. So, for any $k$, where $1\leq k\leq\left \lfloor{\frac{n+1}{2}}\right \rfloor-1$, we obtain
$$
m_k = \frac{r^2 \vert A^*_k A^*_{k+1} \vert}{l_k l_{k+1}}> \frac{r^2 \vert A^*_{k+1}A^*_{k+2} \vert}{l_{k+1} l_{k+2}}=m_{k+1}.
$$
Hence, the result follows.
\end{proof}

\begin{figure}
    \centering
    \begin{subfigure}[b]{0.45\textwidth}
        \centering
        \includegraphics[width=\textwidth]{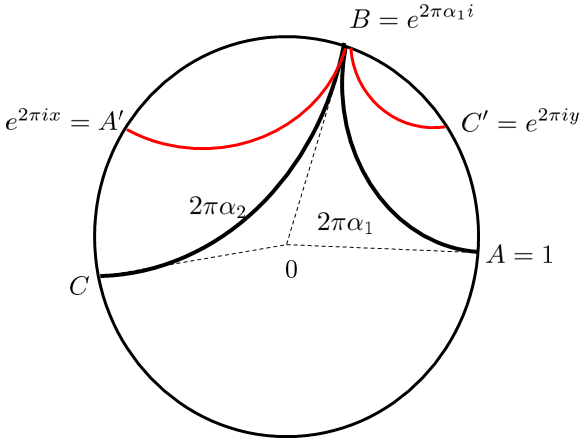}
        \caption{Adjacent sides}
        \label{fig5}
    \end{subfigure}
    \quad
    \begin{subfigure}[b]{0.45\textwidth}
        \centering
        \includegraphics[width=\textwidth]{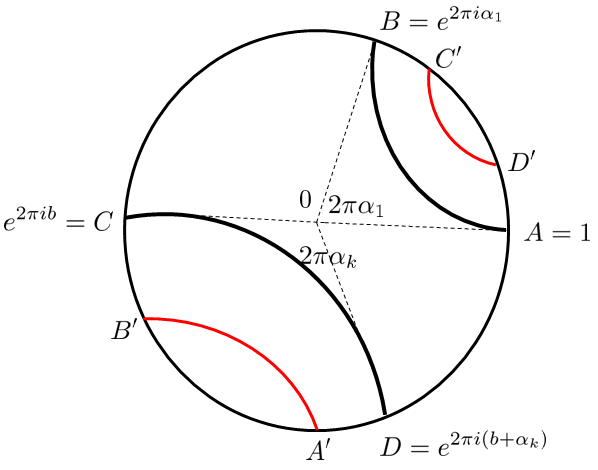}
        \caption{Non-adjacent sides}
        \label{fig6}
    \end{subfigure}
    \caption{Monotonicity of inverted sides in $D_n$}
  \label{fig:three}
\end{figure}

We discuss the monotonicity of the argument of the inverse point and the
monotonicity of the inverted sides with respect to two adjacent and non-adjacent sides of a hyperbolic $n$-gon in Lemma \ref{lem 05}.
This serves as an important tool to prove the main theorem of this paper. 

\begin{lemma}\label{lem 05}
{\rm(i)} {\bf (Monotonicity with respect to one point)} Let $e^{2\pi ix}$ be the inverse point of $e^{2\pi i\beta}$ with respect to the side of $D_n$
whose corresponding angle is $\alpha$. Then $x$ is a decreasing function of $\beta$ for $0 \leq \beta \leq 1$. \\

\noindent {\rm(ii)} {\bf (Monotonicity with respect to two adjacent sides)} Let $\alpha_k$, $\alpha_l$ be the corresponding angles of two adjacent sides of 
$D_n$ and $\alpha_k<\alpha_l$. Then $\alpha_{k,l}<\alpha_{l,k}$.\\

\noindent {\rm(iii)} {\bf (Monotonicity with respect to two non-adjacent sides)} Let $\alpha_k$, $\alpha_l$ for some $k\neq l$ be the 
corresponding angles of two non-adjacent sides of $D_n$ and $\alpha_k<\alpha_l$. Then $\alpha_{k,l}<\alpha_{l,k}$.

\end{lemma}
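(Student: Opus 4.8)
The plan is to treat the three parts in turn, using the explicit inverse-point formulas (\ref{eq1})--(\ref{eq2}) for part (i) and reducing parts (ii)--(iii) to comparisons of Euclidean chord lengths via the inversion distance formula of Section~2, exactly as in the proof of Lemma~\ref{lem9}.

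For part (i), I would simply differentiate the closed form $x=x(\beta)$. Writing $c=\cos\pi\alpha$ and $t=\cos 2\pi(\beta-b)$, a direct computation from (\ref{eq1}) (the branch $\beta>b$) gives
\[
\frac{dx}{d\beta}=\frac{\cos^2\pi\alpha-1}{\cos^2\pi\alpha-2\cos\pi\alpha\cos 2\pi(\beta-b)+1}=\frac{-\sin^2\pi\alpha}{\bigl(\cos\pi\alpha-\cos2\pi(\beta-b)\bigr)^2+\sin^2 2\pi(\beta-b)}.
\]
The denominator is a sum of squares and is strictly positive, since $\cos\pi\alpha\in(0,1)$ for $0<\alpha<1/2$ prevents it from vanishing, while the numerator is strictly negative; hence $dx/d\beta<0$. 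The branch $\beta<b$ coming from (\ref{eq2}) yields the same expression after the substitution $\beta-b\mapsto b-\beta$, and the two branches match by continuity at $\beta=b$, so $x$ is strictly decreasing on $[0,1]$. The only routine point to verify is that differentiating the $\arctan$ term produces exactly the cancellation $\cos2\pi(\beta-b)\cos\pi\alpha-\cos^2 2\pi(\beta-b)-\sin^2 2\pi(\beta-b)=\cos2\pi(\beta-b)\cos\pi\alpha-1$.

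For parts (ii) and (iii) I would first observe, as in Lemma~\ref{lem9}, that because the inverted sides lie on circles orthogonal to $\mathbb{T}$, the arc-angle subtended by a reflected side is a strictly increasing function of the Euclidean chord length joining its endpoints; it therefore suffices to compare chord lengths. If a side $A_sA_{s+1}$ of $D_n$ is reflected across side $j$, whose circle has centre $P_j$ and radius $r_j$, the inversion distance formula of Section~2 gives the chord length of its image as
\[
M_{j,s}=\frac{r_j^{2}\,|A_sA_{s+1}|}{|P_jA_s|\,|P_jA_{s+1}|}.
\]
Thus $\alpha_{k,l}$ and $\alpha_{l,k}$ correspond to two such chord lengths, and the desired inequality becomes a concrete inequality among the radii $r_k,r_l$, the chord lengths of the reflected sides, and the distances from $P_k,P_l$ to the relevant vertices. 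In the adjacent case the two sides share a vertex $V\in\mathbb{T}$, which is fixed by both reflections because it lies on both mirror circles; this collapses the comparison to the two images, emanating from $V$, of the far endpoints. Writing the resulting chord length as a function $F(\alpha_{\mathrm{mirror}},\alpha_{\mathrm{side}})$, the claim reduces to the single inequality $F(\alpha_k,\alpha_l)<F(\alpha_l,\alpha_k)$ for $\alpha_k<\alpha_l$, which I expect to prove by showing that the antisymmetric part $F(a,b)-F(b,a)$ has a definite sign, either from part (i) applied to the moving endpoint or by a one-variable monotonicity argument after fixing the shared vertex.

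The non-adjacent case (iii) is where I expect the main obstacle. Now none of the four endpoints lies on the opposite mirror, so both endpoints of each reflected side move, and the single-point reduction used in (ii) is unavailable. My plan is to reduce (iii) to (ii) by a sliding argument: holding each mirror fixed and invoking part (i) (monotonicity of each reflected endpoint as a function of its pre-image position), I would deform the non-adjacent configuration toward an adjacent one while tracking the sign of the chord-length difference, or equivalently telescope through the intervening sides. The delicate step will be to control both moving endpoints simultaneously and to confirm that the deformation is monotone in the correct direction for each of the two competing chord lengths at once, so that the inequality established in (ii) propagates; this simultaneous control is the part most likely to require careful estimation rather than a direct appeal to (i).
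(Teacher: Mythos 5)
Your part (i) is correct and complete, and it takes the same route as the paper: differentiate the explicit formula (\ref{eq1}). Your closed form for $dx/d\beta$ checks out, and your positivity argument is in fact cleaner than the paper's: the paper proves the denominator $3-2\cos 2\pi\beta+\cos 2\pi\alpha-2\cos 2\pi(\beta-\alpha)$ (which is exactly twice your denominator) is positive by a concavity-and-endpoint-values analysis, whereas you observe it is a sum of squares $(\cos\pi\alpha-\cos 2\pi(\beta-b))^2+\sin^2 2\pi(\beta-b)$ that cannot vanish because $\cos\pi\alpha\in(0,1)$.

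The genuine gap is in part (ii). Your reduction — compare Euclidean chords via the inversion distance formula, using that the shared vertex lies on both mirror circles and is therefore fixed — is valid, and it is genuinely different from the paper, which instead shows that an arc-length difference $f(\alpha_1)$ is concave with $f(0)=f(\alpha_2)=0$ through a delicate critical-point analysis. But your closing sentence, ``show that the antisymmetric part $F(a,b)-F(b,a)$ has a definite sign,'' is a restatement of (ii), not an argument, and neither of your two suggested mechanisms works as stated: part (i) cannot be ``applied to the moving endpoint,'' because the two competing images come from inversions in \emph{different} circles, and (i) only describes a single inversion. The mitigating news is that your reduction does close, and very cleanly. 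Writing $s=\pi\alpha_k$, $t=\pi\alpha_l$, placing the shared vertex at $B=1$ with $A=e^{-2is}$, $C=e^{2it}$, the formula $|BC'|=r_k|BC|/|P_kC|$ gives $|BC'|^2=4\sin^2 s\,\sin^2 t/\bigl(1+\cos^2 s-\cos(2s+2t)-\cos 2t\bigr)$ and symmetrically $|BA'|^2=4\sin^2 s\,\sin^2 t/\bigl(1+\cos^2 t-\cos(2s+2t)-\cos 2s\bigr)$; since the numerators coincide, $|BC'|<|BA'|$ if and only if $\cos^2 s+\cos 2s>\cos^2 t+\cos 2t$, i.e.\ if and only if $\cos 2s>\cos 2t$, which holds for $0<s<t<\pi/2$. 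Some such computation is what your proposal is missing; without it, (ii) is asserted rather than proved.

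For part (iii) you propose exactly the paper's own argument (the paper's proof is one sentence: the inverse points move continuously on $\mathbb{T}$, so rotating $CD$ adjacent to $AB$ reduces to (ii)), and you are right to flag it as incomplete: continuity together with the adjacent-case inequality does not by itself prevent the difference $\alpha_{l,k}-\alpha_{k,l}$ from changing sign at some positive gap. What is needed — and what neither your write-up nor the paper supplies — is sign control as the gap varies: either monotonicity of the difference (or ratio) in the gap parameter, or a direct verification of the three-variable trigonometric inequality that your chord formula produces (the analogue of the two-variable inequality above, with the gap angle as a third parameter, which at gap zero reduces to (ii)). So on (iii) you are no worse off than the paper, but the step remains unproven.
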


\begin{proof}
(i) We normalize the initial end point (in the positive direction of $\mathbb{T}$) 
of the side corresponding to $\alpha$ at $1$. Using (\ref{eq1}), for a fixed $\alpha$, the argument of $e^{2\pi ix}$ is given by a 
$2\pi$-multiple of 
$$
f(\beta) := -\pi+2 \pi \beta +2 \tan^{-1} \left( \frac{\sin 2\pi (\beta-\alpha/2)}{\cos \pi \alpha-\cos 2\pi(\beta-\alpha/2)}\right).
$$
We show that $f$ is a decreasing function of $\beta$, where $\alpha\leq \beta <1$. We obtain,
\begin{equation}\label{eq:20}
\frac{\partial f}{\partial \beta} =\frac{-4\pi \sin^2 \pi \alpha}{3-2\cos 2\pi \beta+\cos 2\pi \alpha-2 \cos 2\pi (\beta-\alpha)}. 
\end{equation}
Note that, $f(\alpha)=2\pi\alpha >0$ and $f(1)= 2\pi >0$. Note that the numerator of (\ref{eq:20}) is clearly negative and therefore it 
suffices to show that 
\begin{equation}\label{eq:func}
 g(\beta):=3-2\cos 2\pi\beta+\cos 2\pi\alpha-2 \cos 2\pi(\beta-\alpha) >0. 
 \end{equation}
Clearly $g(\alpha)=1-\cos 2\pi \alpha >0$, $g(1)=1-\cos 2\pi \alpha >0$.
We therefore complete the proof by showing that $g$ is a concave function of $\beta$, where $\alpha \leq \beta<1$. 
Note that, $\frac{\partial g}{\partial \beta}= 2\pi \left[ \sin 2\pi \beta+ \sin 2\pi (\beta-\alpha)\right] $. Then

$$
\frac{\partial g}{\partial \beta}=0  \Rightarrow \sin 2\pi \beta+ \sin 2\pi (\beta-\alpha)=0 \Rightarrow 2 \sin \left( 2\pi \beta- \pi \alpha \right) \cos \pi \alpha =0.  
$$
Either $\sin \left( 2\pi \beta-\pi \alpha \right)=0$ or $\cos \pi \alpha =0$. But the second equality is not possible since 
$0<\alpha<1/2$. Thus $ \sin \left( 2\pi \beta-\pi \alpha \right)=0$ which implies that $ \beta=\frac{\alpha+k}{2}$. 
Since $\alpha<\beta<1$, then $\alpha<k<2-\alpha$. Since, $0<\alpha<1/2$, then $0<k<3/2$, which implies $k=1$. Therefore, 
for a fixed $\alpha$, $(1+\alpha)/2$ is a critical point of $g$. 
Considering a point in $\left( \alpha, \frac{\alpha+1 }{2} \right)$, say $\frac{3\alpha+1}{4}$, we notice that
$\frac{\partial g}{\partial \beta} = 4 \cos \pi \alpha \cos (\pi\alpha/2)>0$. Therefore, $g$ is increasing in $\left( \alpha, \frac{\alpha+1 }{2} \right)$. 
Considering a point in $(\frac{\alpha+1 }{2},1)$, say $\frac{\alpha+3}{4}$, we notice that 
$\frac{\partial g}{\partial \beta}=-4 \cos \pi \alpha \cos (\pi \alpha/2) <0$. Therefore, $g$ is decreasing in $(\frac{\alpha+1 }{2},1)$. 
Hence, $g$ is a concave function of $\beta$ and the result follows.\\

\noindent (ii) As shown in Figure \ref{fig5}, let $\alpha_1$, $\alpha_2$ be the corresponding angles 
(renaming $\alpha_k$, $\alpha_l$ by $\alpha_1$ and $\alpha_2$ respectively) of two adjacent sides 
$AB$ and $CD$ of $D_n$ respectively and $|AB|<|BC|$. We normalize by rotating one end point of the side $AB$ to $1$.
Let $C':=e^{2\pi iy}$, $A':=e^{2\pi ix}$ be the inverse points of $C$ and $A$ with respect to sides 
$AB$ and $BC$ respectively. Using (\ref{eq2}), the argument of $A'$ is,
\begin{equation}
2\pi x=\pi+2\pi \beta_1-2 \tan^{-1} \left( \frac{\sin 2\pi(b_1-\beta_1)}{\cos \pi \alpha_2-\cos 2\pi (b_1-\beta_1)} \right),
\end{equation}
where $\beta_1=0$, $b_1=\alpha_1+\frac{\alpha_2}{2}$. Also, $C'$ is the inverse point of C with respect to the side AB. 
Using (\ref{eq1}), the argument of $C'$ is,
\begin{equation}
2\pi y=-\pi+2\pi \beta_2+2 \tan^{-1} \left( \frac{\sin 2\pi(\beta_2-b_2)}{\cos \pi \alpha_2-\cos 2\pi (b_2-\beta_2)}\right),
\end{equation}
where $\beta_2=\alpha_1+\alpha_2$, $b_2=\frac{\alpha_1}{2}$. Then, $|BC'|$=$2\pi(\alpha_1-y)$, $|A'B|$=$2\pi(x-\alpha_1)$. 
Therefore, we show that $ |BC'|-|A'B|=2\pi (x+y-2\pi \alpha_1) >0$.
Equivalently, it suffices to show that for a fixed $0<\alpha_2<1/2$,
\begin{align*}
 f(\alpha_1):= &-2 \Biggl[ (\alpha_1-\alpha_2)\pi +\tan^{-1} \left( \frac{\sin \pi(2\alpha_1+\alpha_2)}{\cos \pi \alpha_2-\cos \pi(2\alpha_1+\alpha_2)} \right)\\
 &-\tan^{-1} \left( \frac{\sin \pi(\alpha_1+2\alpha_2)}{\cos \pi\alpha_1-\cos \pi(2\alpha_2+\alpha_1)} \right)\Biggr] 
\end{align*}
is a concave function of $\alpha_1$ where $0<\alpha_1<\alpha_2$. Note that, $f(0)=0,\ f(\alpha_2)=0$. We obtain
$$
\frac{\partial f}{\partial \alpha_1}=-4 (\sin^2 \pi \alpha_2) \left( \frac{2}{\Lambda_2}-\frac{1}{\Lambda_1} \right)
= -4 (\sin^2 \pi \alpha_2) \left( \frac{2\Lambda_1-\Lambda_2}{\Lambda_1 \Lambda_2} \right), 
$$
where 
\begin{eqnarray*}
\Lambda_1 &=3+\cos 2\pi\alpha_2-2\cos 2\pi\alpha_1-2\cos 2\pi (\alpha_1+\alpha_2).\\
\Lambda_2 &=3+\cos 2\pi\alpha_1-2\cos 2\pi\alpha_2-2\cos 2\pi (\alpha_1+\alpha_2).
\end{eqnarray*}
It is straightforward to observe that as in (\ref{eq:func}), for fixed $0<\alpha_2<1/2$, $\Lambda_1,\Lambda_2>0$ for $0<\alpha_1<\alpha_2$. 
Also, $\frac{\partial f}{\partial \alpha_1}=0$ implies 
either $\alpha_2=0$ or $2\Lambda_1-\Lambda_2=0$. But $\alpha_2$ can never be zero, therefore, all the points satisfying $2\Lambda_1-\Lambda_2=0$, for a fixed $\alpha_2$, are the critical points of $f$.  
Next, we claim that there exists only one critical point of $f$ in $(0,\alpha_2)$. 
Equivalently, it is sufficient to show that there exists only one zero of the function $g(\alpha_1):=2\Lambda_1-\Lambda_2$ inside $(0,\alpha_2)$ for a fixed $\alpha_2$. 
Note that $ g(0)=-4 \sin^2 \pi\alpha_2 <0$, $ g(\alpha_2)=-4\cos^2 \pi\alpha_2-\cos \pi\alpha_2+5 >0 $.
By the Intermediate Value Theorem, there exists at least one zero inside $(0,\alpha_2)$. To show, there exists only one, we claim that $g(\alpha_1):=2\Lambda_1-\Lambda_2$ is a strictly 
increasing function of $\alpha_1$, where $0<\alpha_1<\alpha_2<\frac{1}{2}$. Consider
$$
 \frac{\partial g}{\partial \alpha_1}= 2\pi \left(5\sin 2\pi \alpha_1+2 \sin 2\pi (\alpha_1+\alpha_2) \right). 
$$
We shall show that $\frac{\partial g}{\partial \alpha_1} >0$. 
%Equivalently, we show that the function $5 \sin \alpha_1+2\sin (\alpha_1+\alpha_2)$ is increasing function of $\alpha_1$ where $0<\alpha_1<\alpha_2<\pi$. 
We can rewrite it as,
$ 5 \sin \alpha_1+2\sin (\alpha_1+\alpha_2)= \kappa_1 \sin \alpha_1+\kappa_2 \cos \alpha_1 $ where $\kappa_1=5+2\cos \alpha_2$, $\kappa_2=2\sin \alpha_2$. 
Then, using the elementary trigonometry formula, 
\begin{equation*}
 \kappa_1 \sin \alpha_1 +\kappa_2 \cos \alpha_1= \sqrt{\kappa_1^2+\kappa_2^2} \sin (\alpha_1+H), 
 \end{equation*}
where $H=\tan^{-1}\left( \frac{\kappa_2}{\kappa_1} \right)$, it is sufficient to show that $0<\alpha_1+H<\pi$ (since $\sqrt{\kappa_1^2+\kappa_2^2}>0$). 
It is clear that $\alpha_1+H>0$. Since $\alpha_1<\alpha_2$,
$$
 \alpha_1+H=\alpha_1+\tan^{-1} \left(\frac{2\sin \alpha_2}{5+2\cos \alpha_2} \right)<\alpha_2+\tan^{-1} \left(\frac{2\sin \alpha_2}{5+2\cos \alpha_2} \right). 
$$
It can be easily shown that by substituting $\sin \alpha_2$ by $w$ that
$$
 h(w):= \sin^{-1}(w)+\tan^{-1} \left( \frac{2w}{5+2\sqrt{1-w^2}} \right) 
$$
is an increasing function in $(0,1)$. Therefore, for all $w \in (0,1)$,
\begin{align*}
h(w) &< h(1) =\sin^{-1}(1)+\tan^{-1} \left( \frac{2(1)}{5+2(0)} \right)\\
&= \frac{\pi}{2}+\tan^{-1}(2/5)<\frac{\pi}{2}+\frac{\pi}{4}<\pi.\\
\end{align*}
Therefore, $\alpha_1+H<\pi$. Thus, $\frac{\partial f}{\partial \alpha_1}>0$ for $2\Lambda_1-\Lambda_2<0 $ and $\frac{\partial f}{\partial \alpha_1}<0$ for $2\Lambda_1-\Lambda_2>0$. 
Therefore, $f$ is a concave function of $\alpha_1$ which completes the proof.\\

\noindent (iii) As shown in Figure \ref{fig6}, let $\alpha_k$, $\alpha_l$ be the corresponding angles of two non-adjacent sides 
$AB$ and $CD$ of $D_n$ respectively and $|AB|<|BC|$. We normalize one end point of the side AB at $1$.
Notice that the inverse points move continuously on $\mathbb{T}$. Hence by rotating the side $CD$ 
adjacent to $AB$ will reduce this case to (ii) and therefore the result follows.
\end{proof}

To study further properties of the inversion of an hyperbolic $n$-gon, we ask the following questions. 
(a) If we start with a regular polygon with $n$ sides and reflect it with respect to its sides, 
 what can be said about the new polygon? Is it regular or non-regular? 
(b) If we start with a non-regular polygon with $n$ sides and reflect it with respect to its sides, 
what can be said about the new polygon? Is it ever regular or always non-regular? 
The answers to these questions are given by Lemma \ref{lem 04} and \ref{lem 04a}.

\begin{lemma}\label{lem 04}
Let $D_n^{*}$ be a regular hyperbolic polygon with $n \geq 4$ sides. Then $\left( D_n^{*}\right)^{(s)}$ is 
non-regular for $s \geq 1$. Further, $\left( D_3^{*}\right)^{(1)}$ is a regular hyperbolic $6$-gon and
 $\left( D_3^{*}\right)^{(s)}$ is a non-regular hyperbolic polygon for $s \geq 2$.
\end{lemma}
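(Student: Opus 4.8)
The plan is to recast regularity in terms of boundary arcs and then separate the generation-one step from the later generations. A convex hyperbolic polygon of the type under consideration is regular exactly when the consecutive sub-arcs it cuts off on $\mathbb{T}$ are all equal; hence $\left(D_n^{*}\right)^{(s)}$ is regular if and only if all of its $n(n-1)^s$ boundary angles coincide. So in every non-regular case it suffices to exhibit two boundary angles that differ, while in the single exceptional case $(n,s)=(3,1)$ one must verify that all six angles agree.

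For $n\geq 4$ and $s=1$ the claim is immediate from Lemma \ref{lem9}: fixing a side $A_j^{*}A_{j+1}^{*}$ and reflecting the remaining vertices across it produces boundary angles $\alpha_{j,2}^{*}>\alpha_{j,3}^{*}>\cdots$, and since $n\geq 4$ there are at least two of them, so they cannot all be equal and $\left(D_n^{*}\right)^{(1)}$ is non-regular. For $n=3$ and $s=1$ I would instead show that each reflected vertex lands at the midpoint of the opposite arc, so that the six vertices become the sixth roots of unity. This follows from symmetry: the perpendicular bisector of the arc $A_1^{*}A_2^{*}$ is an axis of symmetry of $D_3^{*}$ through $A_3^{*}$, and reflection in the geodesic $A_1^{*}A_2^{*}$ preserves this axis while swapping its two endpoints on $\mathbb{T}$, sending $A_3^{*}$ to the arc-midpoint; equivalently, substituting $\alpha=1/3$, $b=1/6$, $\beta=2/3$ into \eqref{eq1} gives $x=1/6$. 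Hence $\left(D_3^{*}\right)^{(1)}=D_6^{*}$ is the regular hexagon.

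The substance lies in the non-regularity for $s\geq 2$. The guiding observation is that every cell of every generation is an isometric copy of $D_n^{*}$, but for generations $\geq 1$ it is positioned off-center, so the arcs it subtends on $\mathbb{T}$ are unequal. Reflecting a cell across one of its free sides (which is a boundary side of the body) replaces that side by the $n-1$ images of the cell's other sides, and the point is that these $n-1$ new boundary arcs are never all equal. For $n=3$, $s=2$ this is explicit: the generation-one cells are the three caps, whose vertices subtend the unequal arcs $1/6,\,1/6,\,2/3$; reflecting a cap across a free side is not a symmetry of the cap (its only symmetry axis passes through the cap's apex), and a direct application of \eqref{eq1}, together with the distance formula for inverse points and the strict monotonicity of Lemma \ref{lem 05}, shows the two image arcs are unequal, so $\left(D_3^{*}\right)^{(2)}$ is non-regular.

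I expect the passage to $s\geq 2$ to be the main obstacle, because Lemma \ref{lem9} is established only for the centered regular polygon, whereas the cells reflected at higher generations are off-center. To close this I would induct on $s$, the hypothesis recording that the body has strictly unequal extreme boundary arcs $M_s>\mu_s$; the step uses the reflection formula \eqref{eq1}, the positivity of the denominator established in \eqref{eq:func}, and the monotonicity of Lemma \ref{lem 05} to show that reflecting a boundary arc again yields a strictly monotone fan of images, whence $M_{s+1}>\mu_{s+1}$. The only reflection producing equal image arcs is the symmetric one responsible for the regular hexagon, occurring solely at $(n,s)=(3,1)$; ruling out any accidental coincidence among the monotone image arcs at later stages is the delicate point, and it is precisely where the strict inequalities of Lemma \ref{lem 05} are needed.
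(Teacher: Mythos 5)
Your treatment of $s=1$ is correct, and for $n\geq 4$ it is actually cleaner than the paper's: you invoke the strict monotonicity of Lemma \ref{lem9}, which for $n\geq 4$ supplies at least two unequal inverted arcs $\alpha^*_{j,2}>\alpha^*_{j,3}$ among the sides of $(D_n^*)^{(1)}$, whereas the paper proves non-regularity of $(D_n^*)^{(1)}$ by an explicit trigonometric computation: it shows that equality of one particular inverted arc with $\frac{1}{n(n-1)}$ would force $2\sin\frac{(n-2)\pi}{n(n-1)}-\sin\frac{\pi}{n-1}=0$, and then shows this quantity is strictly positive for all $n\geq 4$. Your $n=3$, $s=1$ verification (the reflected vertex lands at the arc midpoint, so the six vertices are the sixth roots of unity) is also correct and matches what the paper gets more tersely from the equality case of Lemma \ref{lem 05}(ii).

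The genuine gap is the case $s\geq 2$, which in your proposal is a plan rather than a proof. You correctly identify the obstacle (Lemma \ref{lem9} is proved only for the centered regular polygon, while cells of generation $\geq 1$ are off-center), but you never establish the induction step $M_s>\mu_s\Rightarrow M_{s+1}>\mu_{s+1}$; you explicitly defer ``ruling out any accidental coincidence among the monotone image arcs'' as the delicate point, and even the claimed inequality of the two image arcs for $(D_3^*)^{(2)}$ is asserted, not computed. This unproven step is precisely the mathematical content of the case $s\geq 2$. The paper closes it by running the implication backwards rather than forwards: if $(D_n^*)^{(s)}$ were regular, then by Lemma \ref{lem 05}(ii) (unequal adjacent sides of a polygon yield unequal sides of its inversion, so a regular child forces a regular parent) the body $(D_n^*)^{(s-1)}$ would be regular, and iterating down, $(D_n^*)^{(1)}$ would be regular --- contradicting the $s=1$ result. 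This backward reduction needs only the qualitative implication ``parent non-regular implies child non-regular,'' which Lemma \ref{lem 05}(ii),(iii) provides, and it never has to quantify the arcs produced by reflecting off-center cells, which is exactly where your forward induction stalls. (The paper's reduction is itself terse --- strictly one must check that two unequal sides of the parent body can be chosen within a single cell of the previous generation, since only such pairs reflect onto sides of the next body --- and its $n=3$, $s\geq 2$ case is dismissed with ``similar arguments''; but that is the route by which the paper finishes.) To repair your argument you should either adopt this backward reduction to the $s=1$ case you have already settled, or genuinely prove your induction step; as submitted, the proof is incomplete for all $s\geq 2$, including the stated claim that $(D_3^*)^{(s)}$ is non-regular for $s\geq 2$.
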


\begin{proof}
If $n=3$, then $(D^*_3)^{(1)}$ is a regular $6$-gon by Lemma \ref{lem 05}(ii). 
Suppose, if possible, $\left( D_n^{*} \right)^{(s)}$ is regular for $s \geq 1$ and for all $n \geq 4$. Then
Lemma \ref{lem 05}(ii) guarantees that $\left( D_n^{*} \right)^{(s-1)}$ is also regular for $s\geq 1$. 
Continuing in a similar fashion, we conclude that $\left( D_n^{*}\right)^{(1)}$ is regular for $n\geq 4$. 
Let $e^{2\pi ix}$ be the inverse point of $e^{2\pi i\beta}$ on the side corresponding to the angle $\alpha$. 
We normalize one endpoint of the side at $1$. Using (\ref{eq1}) for $D^*_n$ with $\alpha=\frac{2\pi}{n}$, 
$\beta=\frac{4\pi}{n}$ and $b=\frac{\pi}{n}$, we have
$$
2\pi x=-\pi+\frac{4\pi}{n}+2\tan^{-1} \left( \frac{\sin 3\pi/n}{\cos \pi/n-\cos 3\pi/n}\right).
$$ 
Then we shall show that $\frac{2\pi}{n}-2\pi x\neq \frac{2\pi}{n(n-1)}$ for $n\geq 4$. Suppose if possible, 
\begin{align*}
&\,\,\,\,\,\,\frac{2\pi}{n}-2\pi x= \frac{2\pi}{n(n-1)}\\
&\Rightarrow \pi-\frac{2\pi}{n-1}=2\tan^{-1} \left( \frac{\sin 3\pi/n}{\cos \pi/n-\cos 3\pi/n}\right)\\
&\Rightarrow \tan \left(\frac{\pi}{2}-\frac{\pi}{n-1} \right)=\frac{\sin 3\pi/n}{\cos \pi/n-\cos 3\pi/n}\\
%&\Rightarrow \cot \frac{\pi}{n-1}=\frac{\sin 3\pi/n}{\cos \pi/n-\cos 3\pi/n}\\
%&\Rightarrow \cos \frac{\pi}{n-1}\left( \cos \frac{\pi}{n}-\cos\frac{3\pi}{n}\right)-\sin \frac{\pi}{n-1}\sin \frac{3\pi}{n}=0\\
%&\Rightarrow \cos \frac{\pi}{n-1}\cos \frac{\pi}{n}- \left( \cos \frac{\pi}{n-1} \cos\frac{3\pi}{n}+sin \frac{\pi}{n-1}\sin \frac{3\pi}{n}\right)=0\\
%&\Rightarrow \cos \frac{\pi}{n-1}\cos \frac{\pi}{n}- \cos \left( \frac{\pi}{n-1}-\frac{3\pi}{n}\right)=0\\
&\Rightarrow \cos \frac{\pi}{n-1}\cos \frac{\pi}{n}-\left( \cos \frac{\pi}{n-1}\cos \frac{3\pi}{n}+\sin \frac{\pi}{n-1}\sin \frac{3\pi}{n}\right)=0\\
&\Rightarrow \frac{1}{2} \left( \cos \frac{\pi}{n(n-1)} +\cos \frac{(2n-1)\pi}{n(n-1)}-2 \cos \frac{(2n-3)\pi}{n(n-1)}\right)=0\\
&\Rightarrow \frac{1}{2} \left( \cos \frac{\pi}{n(n-1)}-\cos \frac{(2n-3)\pi}{n(n-1)}\right)+\frac{1}{2} \left( \cos \frac{(2n-1)\pi}{n(n-1)}-\cos \frac{(2n-3)\pi}{n(n-1)} \right)=0\\
%&\Rightarrow \sin \frac{(n-2)\pi}{n(n-1)}\sin \frac{\pi}{n}-\sin \frac{\pi}{n(n-1)}\sin \frac{2\pi}{n}=0\\
&\Rightarrow \sin \frac{\pi}{n} \left( \sin \frac{(n-2)\pi}{n(n-1)}-2 \sin \frac{\pi}{n(n-1)} \cos \frac{\pi}{n}\right)=0\\
&\Rightarrow \sin \frac{\pi}{n} \left( 2 \sin \frac{(n-2)\pi}{n(n-1)}-\sin \frac{\pi}{n-1} \right)=0
\end{align*}
Clearly $\sin \frac{\pi}{n}\neq 0$ for $n\geq 4$. So, consider $f(n):=  2 \sin \frac{(n-2)\pi}{n(n-1)}-\sin \frac{\pi}{n-1}$. Clearly, $f(4)=1-\frac{\sqrt{3}}{2}>0$, contradicting our assumption. For $n\geq 5$, $\frac{(n-2)\pi}{n(n-1)}>\frac{\pi}{n+2}$. Therefore,
\begin{align*}
f(n) &> 2 \sin \frac{\pi}{n+2}-\sin \frac{\pi}{n-1}\\
&>2 \left[\frac{\pi}{n+2}-\frac{1}{3!} \left( \frac{\pi}{n+2}\right)^3 \right]-\frac{\pi}{n-1}\\
&= \frac{\pi}{3(n-1)(n+2)^3} \left[3n^3-\pi^2n^2-(\pi^2+36)n+2(\pi^2-24)\right].
\end{align*}
A straightforward calculus argument will suggest that the above expression is strictly positive for $n\geq 5$, which is a contradiction. Thus, $\left( D_n^{*}\right)^{(1)}$ is not regular for $n\geq 4$.
Similar arguments will show that $\left( D_3^{*} \right)^{(s)}$, $s\geq 2$ is non-regular.
\end{proof}

\begin{lemma}\label{lem 04a}
Let $D_n$ be a non-regular hyperbolic polygon with $n \geq 3$ sides. Then $\left( D_n \right)^{(s)}$ 
is non-regular for $s \geq 1$ and for all $n \geq 3$. 
\end{lemma}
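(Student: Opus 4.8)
The plan is to prove the equivalent contrapositive: if $(D_n)^{(s)}$ is regular for some $s\geq 1$, then $D_n$ itself is regular. As in the proof of Lemma~\ref{lem 04}, the argument splits into a \emph{descent step}, reducing regularity of $(D_n)^{(s)}$ to regularity of $(D_n)^{(1)}$, and a \emph{base step}, showing that regularity of $(D_n)^{(1)}$ forces $D_n$ to be regular. The base step is the genuinely new ingredient and is where the monotonicity of Lemma~\ref{lem 05}(ii) enters.

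I would first settle the base step, which is clean. Suppose $D_n$ is non-regular. Then the angles $\alpha_1,\dots,\alpha_n$ are not all equal, and since they are arranged cyclically around $\mathbb{T}$ there is a cyclically adjacent pair with $\alpha_k\neq\alpha_{k+1}$; say $\alpha_k<\alpha_{k+1}$. The two sides of $(D_n)^{(1)}$ meeting at the ideal vertex $A_{k+1}$ are exactly the two mutual inversions of the adjacent sides $A_kA_{k+1}$ and $A_{k+1}A_{k+2}$: reflecting the side of angle $\alpha_{k+1}$ across the side of angle $\alpha_k$ produces the side of angle $\alpha_{k,k+1}$, while reflecting the side of angle $\alpha_k$ across the side of angle $\alpha_{k+1}$ produces the side of angle $\alpha_{k+1,k}$. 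By Lemma~\ref{lem 05}(ii), $\alpha_{k,k+1}<\alpha_{k+1,k}$, so $(D_n)^{(1)}$ has two unequal sides and is non-regular. Contrapositively, regularity of $(D_n)^{(1)}$ forces $\alpha_k=\alpha_{k+1}$ for every $k$, hence $D_n$ is regular.

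For the descent step I would run the same comparison one layer further out. Every side of $(D_n)^{(s)}$ is the image, under a single reflection, of a free side of a generation-$(s-1)$ cell. Two such sides that meet at an ideal vertex of a common generation-$(s-1)$ cell $C$ are the mutual inversions of two adjacent free sides of $C$, so if $(D_n)^{(s)}$ is regular, Lemma~\ref{lem 05}(ii) forces all free sides of each generation-$(s-1)$ cell to share a common angle. To upgrade this to regularity of $(D_n)^{(s-1)}$ one then matches these common angles across neighboring cells, again by comparing the two sides of $(D_n)^{(s)}$ incident to the shared ideal vertex. Iterating the resulting implication finitely many times reduces to the base step, and the regularity of $D_n$ it produces contradicts the hypothesis, completing the proof.

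The step I expect to be the main obstacle is the bookkeeping in the descent, namely checking that at \emph{every} ideal vertex on the boundary of $(D_n)^{(s)}$ the two incident sides genuinely are the mutual inversions of a single adjacent pair of parent sides. This is transparent when the two child cells are reflections of the \emph{same} parent cell across two adjacent free sides; indeed this is precisely the configuration that drives the base step, with the root $D_n$ playing the role of the parent. It requires more care when the two child cells descend from distinct parent cells that merely share an ideal vertex inherited from an earlier generation. Pinning down the correct pair of parent sides in this cousin configuration, so that Lemma~\ref{lem 05}(ii) applies and the common free-side angle is matched across neighboring parent cells, is the delicate point; once it is settled the contradiction is immediate, exactly as in the one-step base case and as asserted in the descent of Lemma~\ref{lem 04}.
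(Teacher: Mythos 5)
Your base step is correct and is essentially the paper's own argument at the first generation: non-regularity of $D_n$ yields a cyclically adjacent pair with $\alpha_k<\alpha_{k+1}$, the two sides of $(D_n)^{(1)}$ meeting at $A_{k+1}$ are the mutual inversions of $A_kA_{k+1}$ and $A_{k+1}A_{k+2}$, and Lemma~\ref{lem 05}(ii) makes them unequal. The genuine gap is the one you flag and then leave open, and it is not mere bookkeeping. At a vertex $V$ of $(D_n)^{(s-1)}$ inherited from an earlier generation (with $s\geq 2$), the two sides of $(D_n)^{(s)}$ incident to $V$ are \emph{not} mutual inversions of any pair of sides of $(D_n)^{(s-1)}$: each is the inversion of an \emph{interior} side of the cell complex (the side that the outermost cell at $V$ shares with its neighbour in the fan around $V$) across the corresponding boundary side. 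Already for $D_3^{(2)}$, the side of the body at the original vertex $A_1$ is the reflection of $A_1A_2$ (the interior side of the first-generation cell) across that cell's free side, not a reflection of one boundary side of $D_3^{(1)}$ across the other. Hence Lemma~\ref{lem 05}(ii), which compares two adjacent sides with their mutual inversions, cannot be made to apply by relabelling parent sides. What it does give you, via the new vertices where two free sides of a single cell meet, is that all free sides of each generation-$(s-1)$ cell share a common angle; matching these common angles across neighbouring cells requires a further monotonicity or injectivity statement for the inversion of a third side across each of two sides meeting at a vertex, which your proposal does not supply. So the proof is incomplete as written.

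It is also worth comparing your architecture with the paper's, which is materially shorter. The paper performs the descent only once --- if $(D_n)^{(s)}$ is regular then $(D_n)^{(s-1)}$ is regular by Lemma~\ref{lem 05}(ii) --- and then closes immediately by citing Lemma~\ref{lem 04}: the inversion of a regular polygon with at least four sides is never regular, contradicting the assumed regularity of $(D_n)^{(s)}$; the case $n=3$ is treated separately precisely because $(D_3^*)^{(1)}$ \emph{is} regular. Your route avoids Lemma~\ref{lem 04} entirely, which is a genuine economy in one respect (your base step needs none of that lemma's trigonometric computation, since non-regularity of $D_n$ hands you an unequal adjacent pair), but the cost is running the descent $s$ times, so the unresolved cousin configuration must be crossed at every generation. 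If you adopt the paper's shortcut, only a single descent step remains to be justified; that step still involves the inherited-vertex issue for $s\geq 2$, about which the paper itself is silent, but the argument is then no less complete than the published one and strictly shorter than yours.
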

\begin{proof}
Suppose, if possible, $\left( D_n \right)^{(s)}$ is a regular hyperbolic $n$-gon with
 $s\geq 1, n>3$. Then Lemma \ref{lem 05}(ii) guarantees that  $\left( D_n \right)^{(s-1)}$ is a regular polygon. 
However Lemma \ref{lem 04} confirms that the inversion
of a regular polygon always results in a non-regular polygon, which contradicts our hypothesis. 
Hence, the result follows. Similar arguments will conclude that $\left( D_3 \right)^{(s)}$ 
is non-regular for $s\geq 2$. Also,  $\left( D_3 \right)^{(1)}$ is non-regular by Lemma \ref{lem 05}(ii).
\end{proof}

%%%%%%%%%%%%%%%%%%%%%%%%%%%%%%%%%%%%%%%%%%%%%%%%%%%%%%%%%%%%%%%%%%%%%%%%%%%%%%%%%%%%%%%%%%%%%%%%%%%%%%%%%%%%%%%%%%%%%%%%%%%%%%%%%%%%%%%%%%%%%%%%%

\section{Proof of Theorem \ref{thm-main}}

In this section, we illustrate the proof of Theorem \ref{thm-main} by using variation of vertices of $D_4$ or their continuous movement
on the unit circle $\mathbb{T}$ to exhaust the possible configurations to the extremal one, which is $D_4^*$ as shown in Figure \ref{extremal} where the longest inverted sides of $D_4^*$ are the eight corner ones due to Lemma \ref{lem9}.
We start with an assumed extremal configuration with a fixed number of longest inverted sides. 
The movement of vertices results in change of side-lengths of $D_4$ but preserves the number of sides to produce a ``new'' $D_4$.
We vary the vertices in such a way that causes an increment in the side-length of the ``new'' $D_4$ corresponding to an 
inverted non-longest side in $D_4^{(1)}$ and thus a decrement in the side-length corresponding to 
the inverted longest side. We next argue that there are only two possible movements of the vertices which cause the increment/decrement which further
 contradicts the assumed extremal configuration due to Lemma \ref{lem 05}(ii) and (iii). These concurrently guarantee that there cannot be only 
 one longest side in the extremal configuration.
\par
Suppose first that there is only one longest side in the extremal configuration. The possible configurations are listed in Figure \ref{One extremal side configuration}. 
Due to the symmetry of the $4$-gon the remaining cases of one longest side configuration will be the same as what we discuss here. 
Suppose $\alpha_{1,4}$ is the angle corresponding to the longest side (see Figure \ref{fig7}). Then by Lemma \ref{lem 05}(ii), $\alpha_1 >\alpha_4$.
We move A continuously such that $\alpha_4 >\alpha_1$. This results in $\alpha_{4,1} >\alpha_{1,4}$ (by Lemma \ref{lem 05}(ii)), which gives a contradiction. 
Suppose $\alpha_{1,3}$ is the angle corresponding to the longest side (see Figure \ref{fig8}). Then by Lemma \ref{lem 05}(iii), $\alpha_1 >\alpha_3$. 
We move C continuously such that $\alpha_3>\alpha_1$. This results in $\alpha_{3,1} >\alpha_{1,3}$ (by Lemma \ref{lem 05}(iii)), which again gives a contradiction. Thus, there are more than one longest side in the extremal configuration. 
The following Lemmas \ref{lem-adj} and \ref{lem-opp} explain two particular configurations when the adjacent and opposite sides of extremal $D_4$ 
are equal respectively. 

\begin{figure}
    \centering
    \begin{subfigure}[b]{0.40\textwidth}
        \centering
        \includegraphics[width=\textwidth]{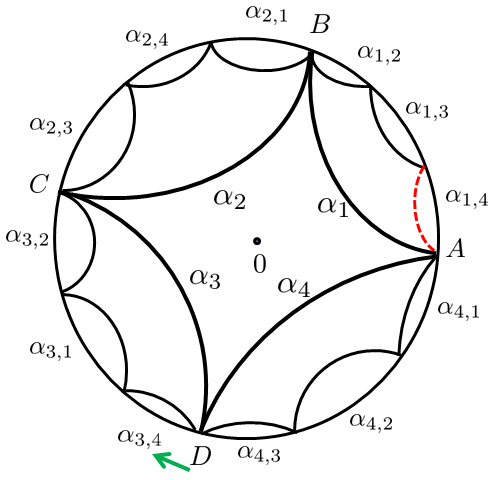}
        \caption{}
        \label{fig7}
    \end{subfigure}
    \quad
    \begin{subfigure}[b]{0.40\textwidth}
        \centering
        \includegraphics[width=\textwidth]{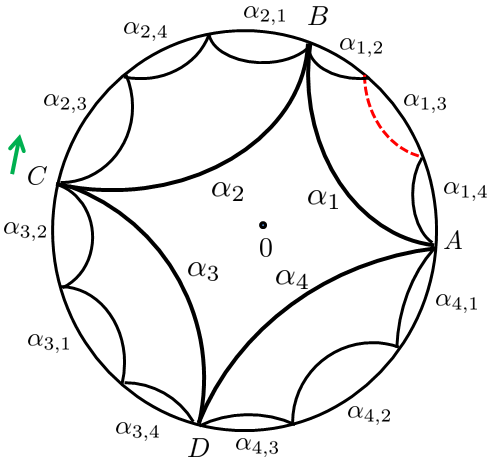}
        \caption{}
        \label{fig8}
    \end{subfigure}
		\caption{Variation in one longest side extremal configuration}
    \label{One extremal side configuration}
\end{figure}

\begin{lemma}\label{lem-adj}
Let $D_4$ be an extremal hyperbolic polygon such that pair of adjacent sides are equal 
 and has at least two adjacent corner longest sides after inversion. Then $D_4=D_4^*$.
\end{lemma}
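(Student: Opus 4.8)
The plan is to pin down all four angles of the extremal $D_4$ by combining the hypothesis with the monotonicity of the inverted side length under inversion. After a rotation I would normalise the labelling so that the equal adjacent pair is $\alpha_1=\alpha_2=:a$; the two adjacent corner longest sides are then exactly the two inverted sides emanating from the common vertex $A_2$, and the equality $\alpha_1=\alpha_2$ is precisely what makes them equal (by Lemma \ref{lem 05}(ii), unequal sides would give unequal corner sides, so they could not both be the maximum). Reading the inverted side angle off (\ref{eq1})--(\ref{eq2}), I write $\Phi(\alpha,\beta)$ for the angle of the side obtained by reflecting a side of angle $\beta$ across the adjacent side of angle $\alpha$, and set $G(t):=\Phi(t,t)$. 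Thus $\max_{j,k}\alpha_{j,k}=L:=G(a)$, whereas the regular $4$-gon is admissible with maximal inverted side $L^\ast:=G(1/4)$ (its eight equal corner sides being the longest by Lemma \ref{lem9}). Since $D_4$ is extremal, $L\le L^\ast$, so once $G$ is known to be strictly increasing this already forces $a\le 1/4$.

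The next step is to prove $\alpha_3=\alpha_4$. Suppose not. The two corner sides at the remaining vertex $A_4$ are $\Phi(\alpha_4,\alpha_3)$ and $\Phi(\alpha_3,\alpha_4)$; by Lemma \ref{lem 05}(ii) they are unequal, the larger one being the image of the longer of the two sides reflected across the shorter, and (this is one of the monotonicity facts isolated below) this larger value strictly exceeds the balanced value $G\!\left(\tfrac12-a\right)$ attained when $\alpha_3=\alpha_4=\tfrac12-a$. Because the hypothesis places the global maximum at $A_2$, every corner side at $A_4$ is at most $L=G(a)$, so $G(a)>G\!\left(\tfrac12-a\right)$, and strict monotonicity of $G$ gives $a>1/4$. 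This contradicts $a\le 1/4$ from the first step, so $\alpha_3=\alpha_4=\tfrac12-a$ after all.

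Finally, with $\alpha_3=\alpha_4=\tfrac12-a$ the configuration is symmetric in the diameter $A_2A_4$, and the requirement that the longest corner sides sit at $A_2$ reads $G(a)\ge G\!\left(\tfrac12-a\right)$, i.e.\ $a\ge 1/4$. Together with $a\le 1/4$ this yields $a=1/4$, whence $\alpha_1=\alpha_2=\alpha_3=\alpha_4=1/4$ and $D_4=D_4^\ast$. The two facts on which everything hinges, and which I expect to be the main obstacle, are the strict monotonicity of $G(t)=\Phi(t,t)$ on $(0,1/2)$ and the statement that redistributing a fixed pair-sum between two adjacent angles strictly increases the larger of the two resulting corner sides; both are monotonicity properties of the explicit expression in (\ref{eq1}) and should follow from the same sign analysis of the derivative already carried out in (\ref{eq:20}) and (\ref{eq:func}) for Lemma \ref{lem 05}.
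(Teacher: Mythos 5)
Your proof hinges on two monotonicity facts that you defer to ``the same sign analysis'' as (\ref{eq:20})--(\ref{eq:func}), and the more important of the two --- the redistribution claim underpinning your Step 2 --- is actually \emph{false}, not merely unproved. You assert that if $\alpha_3+\alpha_4$ is held fixed and $\alpha_3\neq\alpha_4$, then the larger of the two corner sides at the shared vertex strictly exceeds the balanced value $G\bigl(\tfrac{1}{2}(\alpha_3+\alpha_4)\bigr)$. This holds for mild imbalance but reverses for strong imbalance. Writing $\Psi(u,v)$ for the corner side obtained by reflecting the adjacent side of angle $v$ across the mirror side of angle $u$, formula (\ref{eq1}) gives $\Psi(u,v)=u-x$ with $2\pi x=-\pi+2\pi(u+v)+2\tan^{-1}\bigl(\sin 2\pi(\tfrac{u}{2}+v)\big/(\cos\pi u-\cos 2\pi(\tfrac{u}{2}+v))\bigr)$. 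For the fixed sum $\tfrac12$ one computes (in fractions of a full turn): balanced value $G(\tfrac14)=\Psi(\tfrac14,\tfrac14)\approx 0.1024$; split $(0.2,0.3)$: $\Psi(0.3,0.2)\approx 0.108$, $\Psi(0.2,0.3)\approx 0.089$, so here your claim holds; but split $(0.1,0.4)$: $\Psi(0.4,0.1)\approx 0.083$ and $\Psi(0.1,0.4)\approx 0.049$, \emph{both} strictly below $0.1024$. Hence from ``every corner side at $A_4$ is at most $G(a)$'' you cannot conclude $G(a)>G(\tfrac12-a)$, and the contradiction in Step 2 evaporates; the step that was supposed to force $\alpha_3=\alpha_4$ fails. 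Separately, you quote Lemma \ref{lem 05}(ii) backwards: with the paper's convention (first index the mirror), $\alpha_k<\alpha_l\Rightarrow\alpha_{k,l}<\alpha_{l,k}$ says the larger corner side is the image of the \emph{shorter} side reflected across the \emph{longer} one, as the numbers above confirm; this does not damage your logic (you only use the maximum), but it matters for any attempted repair. Finally, the strict monotonicity of $G(t)=\Psi(t,t)$ is true but is genuinely a different computation from (\ref{eq:20}), which differentiates in the position $\beta$ with the mirror fixed, whereas $G$ varies mirror and reflected side simultaneously; it cannot simply be cited.

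For comparison, the paper sidesteps your Step 2 entirely: it reads the hypothesis as \emph{both} pairs of adjacent sides being equal ($\alpha_1=\alpha_4$, $\alpha_2=\alpha_3$), so the only degree of freedom is the common value of the larger pair, and it then kills the non-regular symmetric configuration by a variational argument with Lemma \ref{lem 05}(i): decreasing the two equal large angles strictly shrinks the (assumed longest) corner sides $\alpha_{1,4}=\alpha_{4,1}$, contradicting extremality. Note that under that same reading of the hypothesis, your Steps 1 and 3 alone would give a clean alternative proof --- $G(a)\le G(\tfrac14)$ by comparison with $D_4^*$ (using Lemma \ref{lem9}) and $G(a)\ge G(\tfrac12-a)$ from the longest-corner hypothesis, forcing $a=\tfrac14$ --- but only once you actually supply a proof that $G$ is strictly increasing, which your note still owes.
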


\begin{proof}
Without loss of generality, suppose $\alpha_1>\frac{\pi}{2}$. Given that $\alpha_1=\alpha_4$ and $\alpha_2=\alpha_3$ which forces $\alpha_{1,4}=\alpha_{4,1}$ and $\alpha_{2,3}=\alpha_{3,2}$. 
Suppose $\alpha_{1,4}$ and $\alpha_{4,1}$ are the angles corresponding to the longest sides in the extremal configuration (see Figure \ref{fig-adj}). 
 By Lemma \ref{lem 05}(i), $x$ is a decreasing function of $\alpha_1$, 
where $e^{2\pi ix}$ is the inverse point of $e^{2\pi i\alpha_1}$. As $\alpha_1$ decreases, $-\alpha_1$ increases. 
Therefore, $\alpha_{1,4}<\alpha_{2,3}$ and $\alpha_{4,1}<\alpha_{3,2}$, which is a contradiction. Thus, all the sides are equal and thus $D_4^*$ gives the extremal configuration.
\end{proof}

\begin{figure}
    \centering
    \begin{subfigure}[b]{0.40\textwidth}
        \centering
        \includegraphics[width=\textwidth]{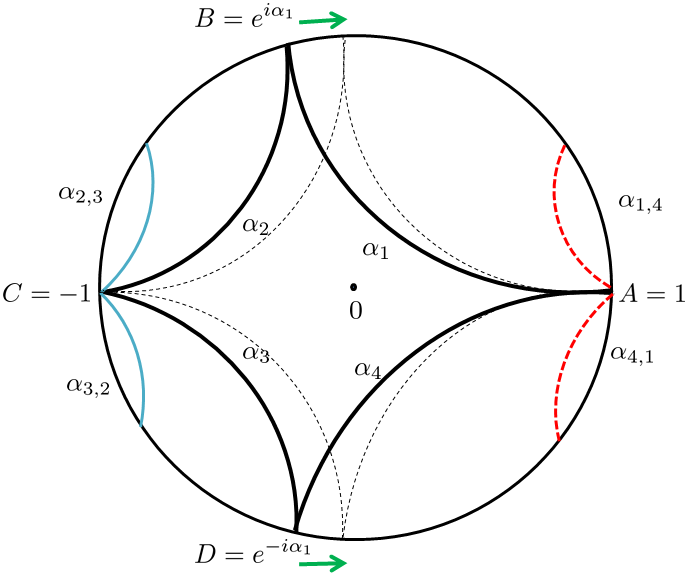}
        \caption{Adjacent longest sides}
        \label{fig-adj}
    \end{subfigure}
    \quad
    \begin{subfigure}[b]{0.40\textwidth}
        \centering
        \includegraphics[width=\textwidth]{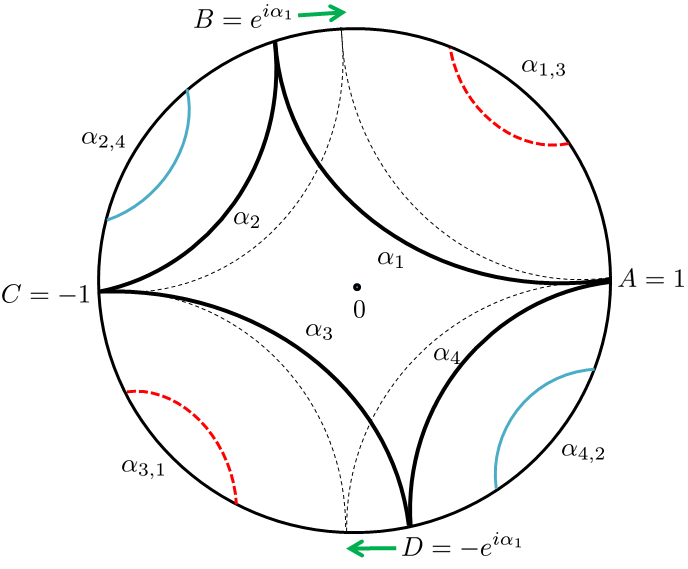}
        \caption{Opposite longest sides}
        \label{fig-opp}
    \end{subfigure}
    \caption{Vertex movement for adjacent and opposite longest sides}
 \label{fig:graphs}
\end{figure}

\begin{figure}
    \centering
   
    \begin{subfigure}[b]{0.40\textwidth}
        \centering
        \includegraphics[width=\textwidth]{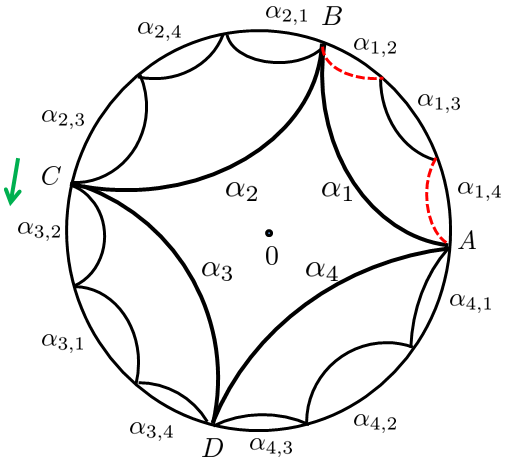}
        \caption{Free-vertex case}
        \label{fig10}
    \end{subfigure}
		\quad 
	\begin{subfigure}[b]{0.40\textwidth}
        \centering
        \includegraphics[width=\textwidth]{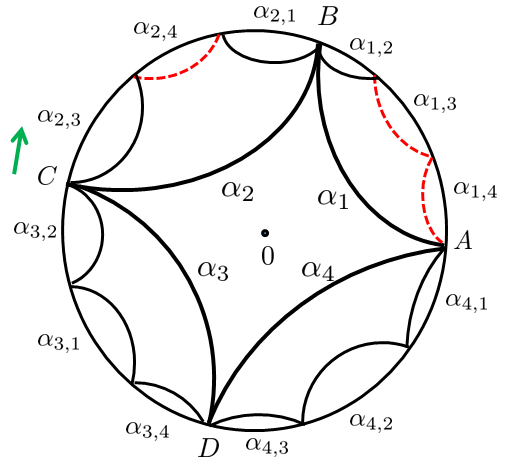}
        \caption{Non-free-vertex case}
        \label{fig19}
    \end{subfigure}
    \caption{Two types of variation}
    \label{Extremal sides configuration}
\end{figure}

\begin{lemma}\label{lem-opp}
Let $D_4$ be an extremal hyperbolic polygon such that pair of the opposite sides are equal 
and has at least two opposite longest sides after inversion. Then $D_4=D_4^*$.
\end{lemma}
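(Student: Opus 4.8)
The plan is to mirror the proof of Lemma~\ref{lem-adj}, replacing the reflection symmetry about the real axis by the central symmetry forced by a pair of equal opposite sides. First I would record the consequences of the hypothesis: from $\alpha_1=\alpha_3$ and $\alpha_2=\alpha_4$ together with $\alpha_1+\alpha_2+\alpha_3+\alpha_4=1$ one gets $\alpha_1+\alpha_2=\tfrac12$, hence $A_3=-A_1$ and $A_4=-A_2$, so $D_4$ is invariant under $z\mapsto -z$ (and, as a bonus, under the reflection interchanging $A_1\leftrightarrow A_2$ and $A_3\leftrightarrow A_4$). This symmetry collects the eight corner inverted sides into equal antipodal families, so in particular the two opposite longest sides posited in Figure~\ref{fig-opp} are automatically equal. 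Writing $a=\alpha_1=\alpha_3$ and $b=\alpha_2=\alpha_4$, I would assume without loss of generality $a\ge b$; if $a=b$ then every $\alpha_k=\tfrac14$ and $D_4=D_4^{*}$, so I suppose for contradiction that $a>b$.

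Next I would locate the longest corners. At each vertex two corner inverted sides are produced, one for each of the two sides meeting there. When those two sides have unequal angles $b<a$, Lemma~\ref{lem 05}(ii) applies directly: with $\alpha_k=b<\alpha_l=a$ it gives $\alpha_{k,l}<\alpha_{l,k}$, that is, the corner obtained by reflecting across the \emph{shorter} side is strictly shorter than the corner obtained by reflecting across the \emph{longer} side. This is the opposite-configuration analogue of the equal-angle comparison that Lemma~\ref{lem 05}(i) supplies in Lemma~\ref{lem-adj}, where both sides at the distinguished vertex have the same angle. By the central symmetry this single local comparison propagates to all four vertices, so the eight corners split into two antipodal quadruples of equal length, with one quadruple strictly shorter than the other whenever $a>b$.

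The contradiction then runs exactly as in Lemma~\ref{lem-adj}. The two opposite longest sides posited in Figure~\ref{fig-opp} belong to the dominated ``reflect-across-the-shorter-side'' family; but the inequality just established shows this family is strictly shorter than the complementary antipodal family, so these sides cannot in fact be the longest, contradicting the hypothesis. Throughout I would keep track of the underlying monotonicity via the inverse-point formula \eqref{eq1}, since Lemma~\ref{lem 05}(i) guarantees that the argument $x$ of the inverse point is a strictly decreasing function of the base parameter and hence that the two corner families separate strictly. Therefore $a=b$, every $\alpha_k$ equals $\tfrac14$, and $D_4=D_4^{*}$.

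The step I expect to be the main obstacle is the bookkeeping that converts the single two-adjacent-side inequality of Lemma~\ref{lem 05}(ii) into a statement about the two \emph{opposite} longest sides of the $12$-gon $D_4^{(1)}$. I must first verify the extra reflection symmetry so that all four corners of each family are genuinely equal, legitimizing the phrase ``the'' opposite longest pair; then confirm that the normalization placing the initial endpoint of the base side at $1$ required by Lemma~\ref{lem 05}(i),(ii) can be arranged simultaneously for both corners being compared; and finally check that the two sides meeting at each vertex are indeed \emph{adjacent}, so that part (ii) rather than part (iii) is the applicable monotonicity statement, and that the configuration of Figure~\ref{fig-opp} selects the dominated family. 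Once these identifications are pinned down, the strict inequality $\alpha_{k,l}<\alpha_{l,k}$ closes the argument immediately.
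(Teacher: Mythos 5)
There is a genuine gap here, and it is structural rather than a matter of bookkeeping. Your argument is purely static: central symmetry plus the corner comparisons of Lemma~\ref{lem 05}(ii). It never invokes the hypothesis that $D_4$ is \emph{extremal} for the problem of Theorem~\ref{thm-main} (i.e.\ a minimizer of the maximal inverted side). But the statement is false if extremality is dropped: as you yourself observe, $\alpha_1=\alpha_3$ and $\alpha_2=\alpha_4$ force invariance under $z\mapsto -z$, and this central symmetry makes \emph{all twelve} inverted sides of $D_4^{(1)}$ occur in equal antipodal pairs. Hence the longest inverted side of \emph{any} centrally symmetric $D_4$, regular or not, automatically has an equal opposite partner, so every non-regular centrally symmetric $D_4$ satisfies all the hypotheses your argument actually uses. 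No contradiction can therefore follow from symmetry and monotonicity comparisons alone. This is visible at the precise step where your proof breaks: assuming $a=\alpha_1=\alpha_3>b=\alpha_2=\alpha_4$, you place the posited opposite longest pair in the dominated ``reflect-across-the-shorter-side'' corner family, but nothing justifies this. The longest pair could equally well be the dominating corner quadruple (e.g.\ $\alpha_{1,4}=\alpha_{3,2}$), or---as is actually depicted in Figure~\ref{fig-opp} and treated in the paper---the pair $\alpha_{1,3}=\alpha_{3,1}$ of \emph{middle} inverted sides, obtained by reflecting each of the two opposite sides across the other. These middle sides are reflections across \emph{non-adjacent} sides (Lemma~\ref{lem 05}(iii), not (ii), is the relevant comparison for them), and your corner-only analysis never touches them; in both cases your inequalities yield no contradiction, so you cannot conclude $a=b$.

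The paper's proof differs in exactly the two respects your proposal is missing. First, it takes the opposite longest pair to be the middle sides $\alpha_{1,3}=\alpha_{3,1}$. Second, it argues \emph{variationally}, as in Lemma~\ref{lem-adj}: one moves the vertices so that $\alpha_1$ decreases and applies the pointwise monotonicity of Lemma~\ref{lem 05}(i) to conclude that $\alpha_{1,3}$ and $\alpha_{3,1}$ strictly decrease (falling below $\alpha_{2,4}$ and $\alpha_{4,2}$), which is incompatible with these sides carrying the maximum in an extremal configuration; minimality of the maximal inverted side is what turns the monotonicity into a contradiction. To repair your proposal you would need to (a) include the four middle inverted sides in the analysis, and (b) replace the static comparison by a variation that strictly decreases the assumed longest pair, contradicting extremality; by the antipodal-pairing observation above, no argument that omits (b) can succeed.
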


\begin{proof}
Given that $\alpha_1=\alpha_3$ and $\alpha_2=\alpha_4$. This forces $\alpha_{1,3}=\alpha_{3,1}$ and $\alpha_{2,4}=\alpha_{4,2}$. 
Suppose $\alpha_{1,3}$ and $\alpha_{3,1}$ are the angles corresponding to longest sides in the extremal configuration (see Figure \ref{fig-opp}). 
By Lemma \ref{lem 05}(i), if $\alpha_1$ decreases then $\pi+\alpha_1$ decreases and therefore $\alpha_{1,3}<\alpha_{2,4}$ and $\alpha_{3,1}<\alpha_{4,2}$, which is a contradiction. 
Thus, all the sides are equal which forces $D_4=D_4^*$.
\end{proof}

We further describe here the remaining two types of variations which demonstrates the exhaustion of all possible configurations that 
can be considered to reach the conclusion that the extremal configuration is when $D_4=D_4^*$.
The analysis of variation described here works identically for any number of longest sides concerned in the assumed extremal configuration. \\
\noindent
(a) The first type of variation is when we vary a ``free'' vertex in a sense that it affects only one inverted longest side in $D_4^{(1)}$ and the others remain unaffected. This variation is already shown in Figure \ref{One extremal side configuration}.
To get a better view of this case for more number of longest sides, suppose that $\alpha_{1,4}$ and $\alpha_{1,2}$ are the angles corresponding to longest sides 
(see Figure \ref{fig10}). 
Then by Lemma \ref{lem 05}(ii), $\alpha_1 >\alpha_2$. We move C such that $\alpha_2>\alpha_1$. 
This results in $\alpha_{2,1} >\alpha_{1,2}$ (by Lemma \ref{lem 05}(ii)), 
a contradiction to the assumption. Thus $\alpha_{1,2}=\alpha_{2,1}$. Thus by Lemma \ref{lem-adj}, the 
extremal configuration is $D_4^*$ as shown in Figure \ref{extremal}.\\
\noindent
(b) The second type of variation is when we vary a ``non-free'' vertex in a sense that it affects some or all inverted longest sides in 
$D_4^{(1)}$. For the better understanding of this case, suppose that $\alpha_{1,4}$, $\alpha_{1,3}$, $\alpha_{2,4}$ are the angles corresponding to longest sides (see Figure \ref{fig19}). Then by Lemma \ref{lem 05}(ii), (iii) 
$\alpha_1$, $\alpha_2 >\alpha_3$, $\alpha_4$. We move C such that $\max \{ \alpha_1, \alpha_2 \}<\min \{\alpha_3,\alpha_4 \}$ which results in 
$\alpha_{4,2} >\alpha_{2,4}$ and $\alpha_{3,1}>\alpha_{1,3}$ (by Lemma \ref{lem 05}(ii), (iii)). 
This is however a contradiction. Thus, the only possibility is 
when $\alpha_{1,3}=\alpha_{3,1}$ and $\alpha_{2,4}=\alpha_{4,2}$. Therefore, by Lemma \ref{lem-opp}, the extremal configuration is $D_4^*$ as shown in Figure \ref{extremal}.\\

%%%%%%%%%%%%%%%%%%%%%%%%%%%%%%%%%%%%%%%%%%%%%%%%%%%%%%%%%%%%%%%%%%%%%%%%%%%%%%%%%%%%%%%%%%%%%%%%%%%%%%%%%%%%%%%%%%%%%%%%%%%%%%%%%%%%%%%%%%%%%%%%%%%
\section{Discussion}

\begin{figure}%
\centering %
\begin{minipage}{0.45\textwidth}%
\includegraphics[scale=0.30]{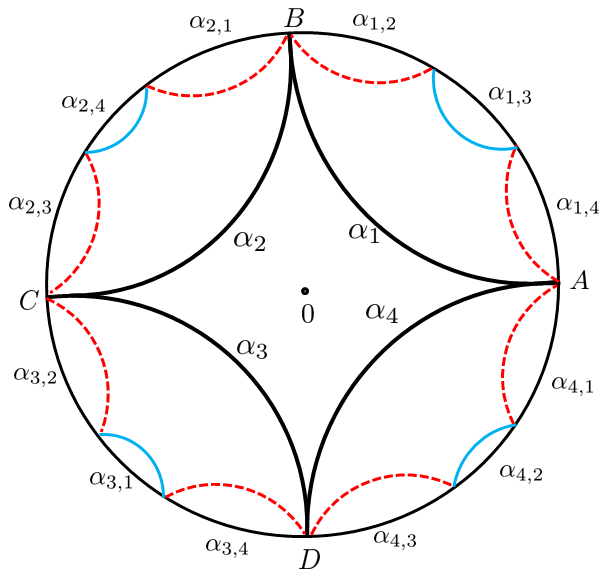}
\end{minipage}
\caption{Extremal Configuration}
\label{extremal}
\end{figure}

Consider a hyperbolic $n$-gon $D_n$ discussed in Section $1$. Since all the vertices of $D_n$ are on $\mathbb{T}$,
then it is well know that the maximal hyperbolic area of $D_n$ is $\frac{\pi}{4}(n-2)$. With an aim to find the explicit 
formula for the Euclidean area of $D_n$, let $\alpha$ be the angle corresponding to
one such side of $D_n$. Then, as shown in Figure \ref{fig1}, 
it is sufficient to find the area of the region $OAD$. Using $\triangle OAB$, we obtain $r= \tan (\pi {\alpha})$.
Therefore, the area of the sector $ABC$ is equal to 
$r^2\theta/2=(\tan ^2(\pi \alpha)/2) ( \pi/2-\pi \alpha)$ 
and the area of $\triangle OAB= \frac{1}{2} |OA||OB|= \frac{1}{2} \tan(\pi \alpha)$. Thus, the 
area of the sector $OCA$
$$ \frac{1}{2} \tan (\pi \alpha) \left( 1-\tan (\pi \alpha)\left( \frac{\pi}{2}-\pi \alpha \right) \right).$$

Therefore, the area of the required region $OAD$ is given by 
$$ F(\alpha):= \tan (\pi \alpha) \left( 1-\tan (\pi \alpha)\left( \frac{\pi}{2}-\pi \alpha \right)\right).$$
Let $\alpha_1,\alpha_2,\ldots,\alpha_n$ be the angles corresponding to the sides of $D_n$. Then the Euclidean area of $D_n$ is given by
\begin{equation}\label{eq:5}
\sum _{k=1}^n F({\alpha}_k)= \sum _{k=1}^n \tan \pi{\alpha}_k \left[ 1- \pi \tan \pi {\alpha}_k \left(\frac{1}{2} -{\alpha}_k \right) \right]. 
\end{equation}

\begin{lemma}\label{lem 01}
For $\alpha \in \mathbb{R}$,
\be \label{eq:F}
F(\alpha) = \tan \pi\alpha \left[ 1-\pi \tan \pi \alpha \left( \frac{1}{2} -\alpha \right) \right]
\ee
is a concave function of $\alpha$ for $0<\alpha <\frac{1}{2}$.
\end{lemma}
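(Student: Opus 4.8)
The plan is to prove concavity by showing $F''(\alpha) < 0$ throughout $(0,\tfrac12)$. First I would clear the awkward factor $\tfrac12 - \alpha$ by substituting $s = \pi\alpha$, which ranges over $(0,\pi/2)$; a short computation rewrites $F$ as $\tan s - \tfrac{\pi}{2}\tan^2 s + s\tan^2 s$. Differentiating twice and collecting terms (using $\tfrac{d}{ds}\tan s = \sec^2 s$ and $\tfrac{d}{ds}\sec^2 s = 2\sec^2 s\tan s$) factors the second derivative as
\[
F''(s) = \sec^2 s\,\bigl[\,6\tan s + (2s-\pi)(3\tan^2 s + 1)\,\bigr],
\]
and since $F''(\alpha) = \pi^2 F''(s)$ and $\sec^2 s > 0$, the whole problem reduces to proving that
\[
h(s) := 6\tan s + (2s-\pi)(3\tan^2 s + 1) < 0, \qquad s \in (0,\pi/2).
\]

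The crux is that $h$ pits the positive term $6\tan s$ against the negative term $(2s-\pi)(3\tan^2 s + 1)$, and near $s = \pi/2$ both blow up at the same rate, so no crude bound will separate them. To handle this cleanly I would multiply by the positive factor $\cos^2 s$ and substitute $w = 2s \in (0,\pi)$, using $3\sin^2 s + \cos^2 s = 2 - \cos 2s$, which turns the statement into the cleaner inequality
\[
G(w) := 3\sin w + (w-\pi)(2 - \cos w) < 0, \qquad w \in (0,\pi).
\]

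Now I would exploit the special behaviour at the right endpoint. A direct check gives $G(\pi)=0$, and successive differentiation yields $G'(\pi)=0$ and $G''(\pi)=0$, while the third derivative collapses to
\[
G'''(w) = (\pi - w)\sin w,
\]
which is strictly positive on $(0,\pi)$. Reading this chain backwards from $w=\pi$ then gives everything for free: $G''' > 0$ forces $G''$ to be increasing, so $G'' < G''(\pi) = 0$; hence $G'$ is decreasing, so $G' > G'(\pi) = 0$; hence $G$ is increasing, so $G < G(\pi) = 0$ on $(0,\pi)$. Translating back through $w = 2s$ and dividing by $\cos^2 s$ yields $h(s) < 0$, hence $F''(\alpha) = \pi^2\sec^2 s\cdot h(s) < 0$, which is the claimed concavity.

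I expect the main obstacle to be precisely the endpoint $\alpha = \tfrac12$ (i.e. $w=\pi$): there the two competing contributions cancel to leading and subleading order, so the inequality is genuinely tight, and any argument based on monotonicity of $F''$ alone, or on naive estimates of $\tan s$, is likely to fail. The device that resolves it is the simultaneous vanishing of $G,G',G''$ at $w=\pi$ together with the clean sign of $G'''$; finding the substitution $w=2s$ (after multiplying by $\cos^2 s$) so that the third derivative simplifies to $(\pi-w)\sin w$ is the key step, and verifying those three endpoint vanishings is the only genuinely computational part of the argument.
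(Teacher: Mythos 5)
Your proof is correct; I checked each step. With $s=\pi\alpha$ one indeed gets $F''=\sec^2 s\,\bigl[6\tan s+(2s-\pi)(3\tan^2 s+1)\bigr]$ (up to the factor $\pi^2$ from the chain rule), multiplying by $\cos^2 s$ and setting $w=2s$ gives exactly $G(w)=3\sin w+(w-\pi)(2-\cos w)$, and the endpoint data $G(\pi)=G'(\pi)=G''(\pi)=0$, $G'''(w)=(\pi-w)\sin w>0$ all verify, so the backwards sign-propagation is valid. The paper performs the same first half: it writes $F''=-\pi^2\sec^4\pi\alpha\,g(\alpha)$ with $g(\alpha)=\pi(1-2\alpha)(2-\cos 2\pi\alpha)-3\sin 2\pi\alpha$, which is precisely $-G(2\pi\alpha)$, and, like you, exploits vanishing at the right endpoint by aiming to show $g$ is strictly decreasing with $g(1/2)=0$. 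But its mechanism differs from yours and, as printed, is defective: it estimates $g'(\alpha)=2\pi\bigl[-2-2\cos 2\pi\alpha+\pi(1-2\alpha)\sin 2\pi\alpha\bigr]\leq 2\pi(-2-2\cos 2\pi\alpha)$, which is backwards, since on $(0,1/2)$ the dropped term $\pi(1-2\alpha)\sin 2\pi\alpha$ is \emph{positive}, so discarding it yields a lower bound, not an upper bound (moreover $2\pi(-2-2\cos 2\pi\alpha)=-8\pi\cos^2\pi\alpha$, not $-2\pi\sin^2\pi\alpha$). The monotonicity of $g$ is nevertheless true, but proving it amounts to the inequality $\pi(1-2\alpha)\sin 2\pi\alpha<2+2\cos 2\pi\alpha$, which after the substitution $x=\pi(1/2-\alpha)$ is exactly $\tan x>x$ --- the ``tight at the endpoint'' cancellation you anticipated, which cannot be dispatched by simply dropping a term. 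Your three-derivative cascade buys robustness here: each vanishing at $w=\pi$ is a one-line check, the sign of $G'''=(\pi-w)\sin w$ is manifest, and no delicate one-shot inequality is needed; the cost is merely two additional mechanical differentiations compared with the paper's (intended) one-derivative argument.
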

\begin{proof}
To show that $F(\alpha)$ is a concave function of $\alpha$, it is sufficient to show that 
$\frac{\d{F}^2}{\d \alpha^2} <0$. We have,
$$ 
\frac{\d{F}^2}{\d \alpha^2} = -\pi^2 \sec^4 \pi\alpha \left[ \pi (1-2\alpha)(2-\cos 2\pi \alpha)-3 \sin 2\pi \alpha\right]. 
$$
Again it suffices to show that 
$$ 
g(\alpha) := \pi (1-2\alpha)(2-\cos 2\pi \alpha)-3 \sin 2\pi \alpha 
$$
is a strictly decreasing function of $\alpha$, which in turn proves that $g(\alpha) > g\left( \frac{1}{2} \right) =0$.
Notice that
\begin{align*}
\frac{dg}{d\alpha} &= 2\pi \left[ -2-2 \cos 2\pi\alpha +\pi(1-2\alpha)\sin 2\pi\alpha \right] \\
&\leq 2\pi (-2-2\cos 2\pi\alpha ) \\
&=-2\pi \sin^2 \pi\alpha<0.
\end{align*}
in $(0,1/2)$. Therefore, the result follows.
\end{proof}

\begin{figure}%
\centering %
%\hspace{-5in} %
\begin{minipage}{0.5\textwidth}%
\includegraphics[scale=0.5]{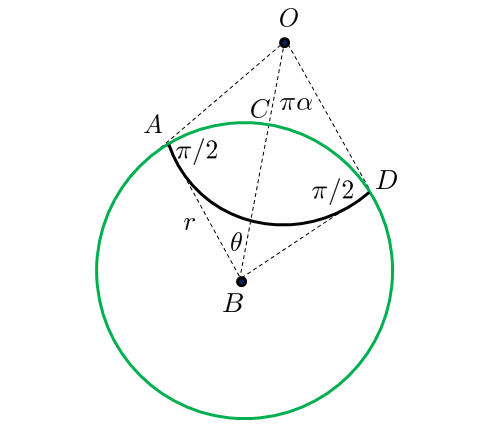}
\end{minipage}
\caption{Area of $D_n$}
\label{fig1}
\end{figure}

\begin{theorem}\label{thm 2}
Let $D_n ^*$ and $D_n$, $n\geq 3$ be as defined before. Then 
\begin{equation}\label{eq:rh}
\Area (D_n) \leq n \tan \frac{\pi}{n} \left[ 1-\frac{\pi (n-2)}{2n} \tan \frac{\pi}{n} \right], 
\end{equation}
where $\Area(D_n)$ is the Euclidean area of $D_n$. Equality in (\ref{eq:rh}) is attained only if $D_n$ is a rotation of $D_n^*$ about the origin.
\end{theorem}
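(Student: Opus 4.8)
The plan is to recognize Theorem \ref{thm 2} as an area-maximization problem that becomes a one-line Schur-concavity statement once the area is written in separable form. The starting point is the explicit Euclidean area formula (\ref{eq:5}), which gives $\Area(D_n) = \sum_{k=1}^n F(\alpha_k)$ as a symmetric sum of the single-variable function $F$ evaluated at the $n$ side-angles. Because the vertices $A_1,\ldots,A_n$ run once around $\mathbb{T}$ with $A_1=1$, the angles obey the single linear constraint $\sum_{k=1}^n \alpha_k = 1$, and convexity forces $0<\alpha_k<\tfrac12$ for every $k$. Thus the task reduces to maximizing $\phi(\boldsymbol{\alpha}) := \sum_{k=1}^n F(\alpha_k)$ over the open set $U = \{\boldsymbol{\alpha}:\, 0<\alpha_k<\tfrac12,\ \sum_k \alpha_k = 1\}$, on which $n F(1/n)$ is the conjectured maximum; a direct substitution shows $nF(1/n)$ equals the right-hand side of (\ref{eq:rh}).

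The first substantive step I would record is the elementary majorization fact that, among all vectors with a fixed coordinate sum, the uniform vector is the smallest in the majorization order: for any admissible $\boldsymbol{\alpha}$ one has $(\alpha_1,\ldots,\alpha_n) \succ (1/n,\ldots,1/n)$, since the average of the $k$ largest coordinates is always at least the global average $1/n$, giving $\sum_{i=1}^k \alpha_{(i)} \ge k/n$ for each $k$ while the totals agree. The second step invokes Lemma \ref{lem 01}, which furnishes the concavity of $F$ on $(0,\tfrac12)$, so that Theorem \ref{thm11} applies with $g = F$ on the open domain $U$: the symmetric sum $\phi$ is Schur-concave, and therefore the majorization above yields $\phi(\boldsymbol{\alpha}) \le \phi(1/n,\ldots,1/n) = n F(1/n)$, which is exactly the bound (\ref{eq:rh}).

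For the equality statement I would upgrade Lemma \ref{lem 01} to \emph{strict} concavity; this is already contained in its proof, since the argument there shows $g(\alpha) > g(\tfrac12) = 0$ strictly on $(0,\tfrac12)$, whence $F''(\alpha) = -\pi^2 \sec^4\pi\alpha \cdot g(\alpha) < 0$ strictly. Strict concavity of each summand makes $\phi$ strictly Schur-concave, so equality $\phi(\boldsymbol{\alpha}) = \phi(1/n,\ldots,1/n)$ together with $(\alpha_1,\ldots,\alpha_n)\succ(1/n,\ldots,1/n)$ forces $\boldsymbol{\alpha}$ to be a permutation of $(1/n,\ldots,1/n)$, i.e.\ $\alpha_k = 1/n$ for all $k$. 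Since the angles are read off in the positive direction along $\mathbb{T}$, equal angles mean all sides subtend equal arcs, which characterizes precisely the rotations of $D_n^*$; because $\Area$ is rotation-invariant, the normalization $A_1 = 1$ plays no role in this identification.

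The inequality itself offers essentially no obstacle once the area is exhibited as a separable symmetric sum — it is a textbook application of the Hardy--Littlewood--P\'olya machinery of Theorem \ref{thm11}. The only points demanding care, which I would treat as the genuine content, are the bookkeeping that legitimizes that application: confirming that the admissible configuration space is an open subset of the hyperplane $\sum_k\alpha_k = 1$ on which $F$ is strictly concave, verifying the majorization of the uniform vector, and converting the strict Schur-concavity into the precise extremal characterization so that equality holds exactly for rotations of $D_n^*$ and for no other convex hyperbolic $n$-gon.
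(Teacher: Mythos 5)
Your proof is correct and rests on the same two pillars as the paper's: the separable area formula (\ref{eq:5}) and the concavity of $F$ from Lemma \ref{lem 01}. The only real difference is the concluding device. The paper finishes in one line with Jensen's inequality for the concave $F$, namely $F\bigl(\tfrac{1}{n}\sum_k\alpha_k\bigr)\ge\tfrac{1}{n}\sum_k F(\alpha_k)$, giving $\sum_k F(\alpha_k)\le nF(1/n)$ directly; you instead verify that every admissible angle vector majorizes the uniform vector $(1/n,\ldots,1/n)$ and invoke the Schur--Hardy--Littlewood--P\'olya theorem (Theorem \ref{thm11}) to reach the same bound. The two routes are equivalent in this setting---Jensen with equal weights is exactly Schur-concavity tested against the uniform vector---so your version buys no additional strength for the inequality itself, though it is precisely the machinery the paper itself deploys later, in Section 5, for the conjecture about $D_n^{(1)}$; one small bookkeeping correction is that Schur-concavity should be asserted on the box $(0,\tfrac12)^n$ (with $F$ concave on the interval $(0,\tfrac12)$), and the majorization comparison then takes place inside that box, rather than treating the constraint set $\sum_k\alpha_k=1$ itself as the open domain of Theorem \ref{thm11}. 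Where your write-up genuinely adds value is the equality case: the paper's proof establishes only the inequality and never argues the characterization of equality, whereas you observe that the proof of Lemma \ref{lem 01} in fact yields strict concavity (since $g(\alpha)>g(\tfrac12)=0$ on $(0,\tfrac12)$), so that equality forces $\alpha_k=1/n$ for every $k$, i.e.\ $D_n$ is a rotation of $D_n^*$. That supplies the half of the theorem's statement that the paper leaves implicit.
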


\begin{proof}
We maximize the area of $D_n$ given in (\ref{eq:5}) subject to the condition $\alpha_1 +\alpha_2 +\dots+\alpha_n=1$.
By Lemma \ref{lem 01}, 
for each $0 <\alpha_k <1/2$, $F(\alpha_k)$ is a concave function of $\alpha_k$.
Thus using Jensen's inequality \cite{MOA}, we obtain
$$
F\left( \frac{\sum \limits_{k=1}^n \alpha_k}{n} \right) \geq \frac{\sum \limits_{k=1}^n F(\alpha_k)}{n} 
\Rightarrow \sum \limits_{k=1}^n F(\alpha_k) \leq n F\left( \frac{1}{n}\right).
$$
where $F(1/n)$ is computed using (\ref{eq:F}) to get the right hand side expresssion in (\ref{eq:rh}).
\end{proof}

A.~Solynin suggested an immediate {\it geometric} question ``{\it How does the Euclidean area of $D_n^{(s)}$ grow for $s\geq 0$}?''
In particular,  
\begin{conjecture}\label{con1}
Let $D_n^{(s)}$ and $(D^*_n)^{(s)}$, $s\geq 0$ and $n\geq 3$ be as defined before. Then
\begin{equation}\label{eq-con1}
\Area (D_n^{(s)}) \leq \Area((D_n^*)^{(s)}) .
\end{equation}
with the sign of equality only if $D_n$ is a rotation of $D_n^*$ about the origin.
\end{conjecture}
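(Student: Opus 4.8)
The plan is to reduce the area inequality to a \emph{majorization} statement about boundary angles and then invoke the Schur-concavity machinery of Section 2. First I would observe that the area formula \eqref{eq:5} is not special to $D_n$: for \emph{any} convex polygon inscribed in $\mathbb{T}$ with the origin in its interior, decomposing it into the central regions $OAD$ of Figure \ref{fig1} shows that its Euclidean area equals $\sum_\ell F(\theta_\ell)$, summed over the central angles $\theta_\ell$ of its sides. Since $D_n^{(s)}$ and $(D_n^*)^{(s)}$ are both convex polygons inscribed in $\mathbb{T}$ with exactly $n(n-1)^s$ sides whose central angles sum to $1$, we obtain $\Area(D_n^{(s)})=\sum_\ell F(\theta_\ell)$ and $\Area((D_n^*)^{(s)})=\sum_\ell F(\theta_\ell^*)$. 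As $F$ is concave on $(0,1/2)$ by Lemma \ref{lem 01}, Theorem \ref{thm11} makes $\mathbf{x}\mapsto\sum_\ell F(x_\ell)$ Schur-concave; hence \eqref{eq-con1} follows once one proves the geometric claim that the vector $\boldsymbol\theta$ of boundary central angles of $D_n^{(s)}$ majorizes the vector $\boldsymbol\theta^*$ of boundary central angles of $(D_n^*)^{(s)}$. Because $F$ is in fact \emph{strictly} concave, equality then forces $\boldsymbol\theta$ to be a permutation of $\boldsymbol\theta^*$, which should pin down $D_n$ as a rotation of $D_n^*$.

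This \emph{Majorization Claim} already subsumes the paper's results. For $s=0$ it is immediate and reproves Theorem \ref{thm 2}: the angles $\alpha_1,\dots,\alpha_n$ of $D_n$ sum to $1$, and every such vector majorizes the constant vector $(1/n,\dots,1/n)$ of $D_n^*$. For $s=1$ and $n=4$, Theorem \ref{thm-main} is exactly the \emph{first} of the majorization inequalities, $\max_\ell\theta_\ell\ge\max_\ell\theta_\ell^*$. The Claim is thus a genuine strengthening, demanding all partial sums $\sum_{i\le k}\theta_{(i)}\ge\sum_{i\le k}\theta^*_{(i)}$ rather than only the leading term.

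I would prove the Majorization Claim by induction on $s$, the inductive step being a \emph{majorization-monotonicity} property of a single round of inversion: if the free-side angles of a body majorize those of a combinatorially identical body, then so do the free-side angles of their two inverted bodies. Locally this ought to follow one cell at a time. Every cell is a hyperbolic $n$-gon, Lemma \ref{lem 05}(i)--(iii) controls monotonically how the sorted inverted-side angles produced by reflecting a cell depend on that cell's own angles, and Lemma \ref{lem9} fixes the ordering of these inverted sides in the regular case. Assembling the per-cell comparisons into one sorted comparison for the whole body, with the inductive hypothesis supplying the parent-level majorization, would close the induction.

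The main obstacle is precisely this majorization-monotonicity of one inversion. Theorem \ref{thm-main} controls only the single \emph{largest} inverted side, and only for $n=4$, whereas majorization requires simultaneous control of \emph{every} partial sum of the decreasingly sorted angle vectors. Establishing these requires extending the vertex-movement exhaustion argument of Section 4 from the functional $\max_{j,k}\alpha_{j,k}$ to the whole family $\sum_{i\le k}\theta_{(i)}$, together with a careful matching of the sorted sides of $D_n^{(s)}$ against those of $(D_n^*)^{(s)}$. Two features make this delicate: the dimension $n(n-1)^s$ grows with $s$, and by Lemma \ref{lem 04} the comparison body $(D_n^*)^{(s)}$ is itself non-regular for $s\ge1$, so $\boldsymbol\theta^*$ is not the constant vector and the elementary ``constant vector is minimal'' shortcut of the base case is unavailable. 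Propagating the majorization faithfully through the inversion tree, rather than merely bounding the extreme side, is the crux.
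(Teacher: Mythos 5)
The statement you are asked to prove is, in the paper itself, an \emph{open conjecture}: the paper does not prove Conjecture \ref{con1}, and only establishes the case $s=0$ (Theorem \ref{thm 2}, via Jensen's inequality), the case $s=1$, $n=3$ (via Lemma \ref{lem 04}, since $(D_3^*)^{(1)}$ is regular), and, for $n=4$, $s=1$, the single extremal inequality $\beta_{(1)}\geq\beta_{(1)}^*$ (Theorem \ref{thm-main}). Your proposal is not a proof either: it is a reduction of \eqref{eq-con1} to your ``Majorization Claim'', which for $s=1$ is verbatim the paper's own (also open) Conjecture \ref{con2}, and for $s\geq 2$ a generalization of it. The reduction itself --- write the Euclidean area as $\sum_\ell F(\theta_\ell)$, invoke concavity of $F$ (Lemma \ref{lem 01}) and Schur-concavity (Theorem \ref{thm11}), so that angle-majorization implies the area inequality --- is exactly the strategy the paper lays out in Section 5; what you add beyond it is the (correct) observation that the $s=0$ case falls out because any nonnegative vector summing to $1$ majorizes the constant vector $(1/n,\ldots,1/n)$.

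The genuine gap is the Majorization Claim, and your inductive sketch does not close it. The inductive step you need --- that one round of inversion preserves majorization between the sorted free-side angle vectors of two combinatorially identical bodies --- is nowhere established, and the tools you cite cannot deliver it: Lemma \ref{lem 05}(i)--(iii) gives only pairwise comparisons of single inverted angles ($\alpha_{k,l}$ versus $\alpha_{l,k}$), and Lemma \ref{lem9} only orders the inverted sides within one regular cell; neither controls the partial sums $\sum_{i\leq k}\theta_{(i)}$ of a sorted concatenation over all cells, which is what majorization demands. Majorization is also not stable under the per-cell assembly you describe: even if each cell's sorted output vector majorized its regular counterpart's, the concatenated and re-sorted vectors need not compare, because the partial sums mix entries from different cells in an order that itself depends on the configuration. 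Moreover, Theorem \ref{thm-main} supplies only the $k=1$ partial sum, only for $n=4$ and $s=1$, so even the base of your induction is missing beyond that point; and as you note, Lemma \ref{lem 04} makes $(D_n^*)^{(s)}$ non-regular for $s\geq 1$, which removes the only elementary shortcut. In short, your proposal correctly reformulates the conjecture in the paper's own majorization language, but the crux --- which you yourself flag as the main obstacle --- remains exactly as open as it is in the paper, so this is a reduction to an unproven claim, not a proof.
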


The case $s=0$ for any $n\geq 3$ of Conjecture \ref{con1} is proved in Theorem \ref{thm 2} that the 
Euclidean area for $D_n$ is maximal for the regular $n$-gon $D^*_n$. 
Also, by Lemma \ref{lem 04}, it is known that $(D_3^*)^{(1)}$ is a regular $6$-gon and hence the conjecture \ref{con1} is proved for $s=1$ and $n=3$.
 However, it is not straightforward to prove the result for any $n\geq 4$ and $s\geq 1$. It is clear that the method applied in 
 Theorem \ref{thm 2} does not work for $s\geq 1$ and $n\geq 4$ due to the complexity of the problem and in particular the 
involvement of a large number of sides.
 In support of the inequality conjectured in (\ref{eq-con1}) we discuss here the application of majorization techniques discussed in Section $2$ which may serve as a 
 promising line of attack to prove Conjecture \ref{con1} for $s=1$ and for any $n\geq4$.  
\par
Let $\mathcal{B}:=\left( \beta_{(1)},\beta_{(2)},\ldots,\beta_{(n(n-1))} \right)$ be a decreasing 
rearrangement of $\lbrace \alpha_{j,k}\rbrace$ and $\mathcal{B}^*:=(\beta_{(1)}^*,\beta_{(2)}^*,\ldots,\beta_{(n(n-1))}^*)$ be a 
decreasing rearrangement of $\lbrace \alpha_{j,k}^*\rbrace$, that is,
$\beta_{(1)} = \max _{j,k} \lbrace \alpha_{j,k}\rbrace$, $\beta_{(2)} =$ second largest 
$\alpha_{j,k},\ldots, \beta_{(n(n-1))} = \min_{j,k} \lbrace \alpha_{j,k}\rbrace$ and so on for $\mathcal{B}^*$.
 Notice that, to prove Conjecture \ref{con1} for $s=1$, it is sufficient to prove the following conjecture:

\begin{conjecture}\label{con2}
Let $\mathcal{B}$ and $\mathcal{B}^*$ be as defined before. Then $\mathcal{B} \succ \mathcal{B}^*$.
\end{conjecture}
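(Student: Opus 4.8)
The plan is to deduce $\mathcal{B}\succ\mathcal{B}^*$ from a single structural property of the inversion, after first disposing of the equality-of-sums requirement. That requirement is automatic: both $D_n^{(1)}$ and $(D_n^*)^{(1)}$ are convex hyperbolic $n(n-1)$-gons with all vertices on $\mathbb{T}$ and all sides on circles orthogonal to $\mathbb{T}$, so their $n(n-1)$ sides subtend the consecutive arcs cut off on $\mathbb{T}$ by consecutive vertices, and these arcs partition the circle. Hence
\[
\sum_{j,k}\alpha_{j,k}=1=\sum_{j,k}\alpha^*_{j,k},
\]
and it remains only to establish the partial-sum inequalities $\sum_{i=1}^m\beta_{(i)}\ge\sum_{i=1}^m\beta_{(i)}^*$ for $1\le m\le n(n-1)-1$. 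The case $m=1$ is precisely Theorem~\ref{thm-main} for $n=4$, which is both a consistency check and the model for the general argument.

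The main idea is to view the inversion as a transfer map $T$ sending the vector of generating angles $(\alpha_1,\dots,\alpha_n)$, normalized by $\sum_k\alpha_k=1$ and $0<\alpha_k<1/2$, to the unordered vector $(\alpha_{j,k})$ of inverted angles, so that $\mathcal{B}$ and $\mathcal{B}^*$ are the decreasing rearrangements of $T(\alpha_1,\dots,\alpha_n)$ and $T(1/n,\dots,1/n)$. Since the uniform vector is majorized by every admissible angle vector, $(\alpha_1,\dots,\alpha_n)\succ(1/n,\dots,1/n)$ always holds. Consequently it would suffice to prove that $T$ is \emph{monotone with respect to majorization}: if $\mathbf{a}\succ\mathbf{a}'$ then $T(\mathbf{a})\succ T(\mathbf{a}')$. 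Applying this with $\mathbf{a}=(\alpha_1,\dots,\alpha_n)$ and $\mathbf{a}'=(1/n,\dots,1/n)$ yields $\mathcal{B}\succ\mathcal{B}^*$ at once, whence Conjecture~\ref{con1} for $s=1$ follows from the Schur-concavity of the area functional $\sum_{j,k}F(\alpha_{j,k})$ via Lemma~\ref{lem 01} and Theorem~\ref{thm11}.

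To prove the monotonicity of $T$, I would invoke the Hardy--Littlewood--P\'olya description of majorization: $\mathbf{a}\succ\mathbf{a}'$ can be realized by finitely many Robin Hood transfers, each decreasing a larger coordinate $\alpha_l$ and increasing a smaller coordinate $\alpha_k$ by the same amount while fixing $\alpha_k+\alpha_l$ and all other coordinates. It therefore suffices to show $T(\mathbf{a})\succ T(\mathbf{a}')$ when $\mathbf{a}'$ arises from $\mathbf{a}$ by a single such transfer. Here the monotonicity results of Section~$3$ supply the essential first-order information: Lemma~\ref{lem 05}(i) tracks each inverse point as a monotone function of its argument, while Lemma~\ref{lem 05}(ii),(iii) show that equalizing two adjacent or two non-adjacent sides pulls the corresponding inverted angles $\alpha_{k,l}$ and $\alpha_{l,k}$ toward one another. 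The intended conclusion is that a balancing transfer on the generating polygon contracts the inverted multiset in the majorization order, with the fully balanced configuration, whose inverted angles obey the symmetric decreasing pattern of Lemma~\ref{lem9}, sitting at the bottom.

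The hard part will be converting these pairwise movements into the full chain of partial-sum inequalities. A single transfer on the sides $\alpha_k,\alpha_l$ simultaneously perturbs \emph{every} inverted angle in which either side occurs, whether as reflecting side or as reflected vertex, so one must control an overlapping family of $O(n)$ coupled quantities at once rather than an isolated pair; moreover the decreasing rearrangement can reshuffle under the perturbation, so the index set realizing $\sum_{i\le m}\beta_{(i)}$ may itself change. Combined with the non-smoothness of the order-statistic sums exactly at the highly degenerate regular configuration, where $\mathcal{B}^*$ has many equal entries, this is the obstacle that keeps the statement conjectural: the monotonicity lemmas deliver the correct local directions, but assembling them into the global majorization inequality for all $m$ is the step that remains to be carried out.
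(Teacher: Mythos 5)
There is no paper proof to compare against here: this statement is left as a \emph{conjecture} in the paper. All the paper establishes is the single case $m=1$, $n=4$ of the partial-sum inequalities (Theorem \ref{thm-main}), plus a consistency check that the conjectured majorization would imply Conjecture \ref{con1} for $s=1$ via Schur-concavity (Lemma \ref{lem 01} and Theorem \ref{thm11}). Your proposal also does not prove the statement, and says so explicitly: your final paragraph concedes that assembling the monotonicity lemmas into the full chain $\sum_{i\le m}\beta_{(i)}\ge\sum_{i\le m}\beta^*_{(i)}$ for all $m$ ``remains to be carried out.'' So what you have is a reduction plus a plan, with the decisive step missing; that missing step is precisely where the entire difficulty of the conjecture lives, so the reduction does not advance matters by itself.

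More importantly, the pivot of your plan --- that the transfer map $T$ be monotone with respect to majorization, i.e.\ $\mathbf{a}\succ\mathbf{a}'\Rightarrow T(\mathbf{a})\succ T(\mathbf{a}')$ --- is not well-posed as stated, and in its literal form it is almost certainly false. Majorization is a relation on multisets, invariant under permuting coordinates; but $T$ is \emph{not} a function of the multiset $\{\alpha_1,\dots,\alpha_n\}$, because the inverted angles $\alpha_{j,k}$ given by (\ref{eq1}) and (\ref{eq2}) depend on the positions of all vertices, hence on the cyclic adjacency structure of the sides. For $n=4$ the cyclic arrangements $(\alpha_1,\alpha_2,\alpha_3,\alpha_4)$ and $(\alpha_1,\alpha_3,\alpha_2,\alpha_4)$ carry the same multiset, so each majorizes the other; your monotonicity claim would then force their inverted multisets to coincide, which the geometry gives no reason to expect --- indeed the care Lemma \ref{lem 05} takes to distinguish adjacent from non-adjacent sides points the other way. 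To make the plan even meaningful you must fix the cyclic order and restrict to Robin Hood transfers acting on two sides of the \emph{same} polygon, proving that each such transfer contracts the inverted multiset in the majorization order; that restricted statement avoids the well-posedness problem but is still a strictly stronger assertion than the conjecture, and it is exactly the statement you admit you cannot prove (a single transfer perturbs all $O(n)$ inverted angles involving either side, and the rearrangement can reshuffle). Note also Lemma \ref{lem 04}: $(D_n^*)^{(1)}$ is non-regular for $n\ge 4$, so the image of the uniform vector is not itself uniform --- the bottom of the majorization order upstairs does not map to anything recognizably extremal downstairs except by virtue of the conjecture itself. In short, the proposal contains a genuine structural flaw (the ill-posed monotonicity reduction) on top of the openly acknowledged gap, and the conjecture remains open after it, exactly as it does in the paper.
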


Notice that to prove Conjecture \ref{con2}, we need to show 
\begin{equation}\label{eq-proof}
\sum_{i=1}^m \beta_{(i)} \geq \sum_{i=1}^m \beta^*_{(i)}
\end{equation} 
for all $m=1,2,\ldots, n(n-1)$. In particular, we already proved in Theorem \ref{thm-main} that $\beta_{(1)} \geq \beta_{(1)}^*$ for $D_4$. 
We strongly believe that repeating similar arguments, one can generalize the result of Theorem \ref{thm-main} for any $D_n$.
However, we need a different tool to prove the remaining cases of (\ref{eq-proof}). 
By substituting $\alpha$ by $\alpha_{j,k}$ in Lemma \ref{lem 01}, we obtain the function $F(\alpha_{j,k})$. 
So, to prove Conjecture \ref{con1} for $D_n^{(1)}$, we maximize {\it area}$(D_n^{(1)})$ which is,
$$
G(\mathcal{B}) := \sum \limits_{j=1}^{n(n-1)} F(\beta_{(j)}) 
$$
subject to $\sum \limits_{k=1}^{n-1} \alpha_{j,k} =\alpha _j$, $j=1,2,,\ldots,n$ and $\sum \limits_{j=1}^n \alpha_j =1$.
Lemma \ref{lem 01} affirms that $G(\mathcal{B})$ is a concave function and Theorem \ref{thm11} confirms that $G$ is a Schur-concave function. Therefore, 
$G(\mathcal{B}) \leq G(\mathcal{B}^*)$ which supports the inequality conjectured in Conjecture \ref{con2} for $s=1$ is true.\\

\noindent 
{\bf Acknowledgements.} I would like to thank A.~Solynin for introducing the problem and for helpful discussions.

%%%%%%%%%%%%%%%%%%%%%%%%%%%%%%%%%%%%%%%%%%%%%%%%%%%%%%%%%%%%%%%%%%%%%%%%%%%%%%%%%%%%%%%%%%%%%%%%%%%%%%%%%%%%%%%%%%%%%%%%%%%%%%%%%%%%%%%%%%%%%%%

%

\end{document}